\newtheorem{lemma}{Lemma}[section]
\newtheorem{theo}[lemma]{Theorem}
\newtheorem{prop}[lemma]{Proposition}
\newtheorem{cor}[lemma]{Corollary}
\theoremstyle{definition}
\newtheorem{defin}[lemma]{Definition}
\newtheorem{remark}[lemma]{Remark}
\numberwithin{equation}{section}
\newenvironment{eq}{\begin{equation}}{\end{equation}}
\newcommand{\Char}{\mathop{\rm char}}
\newcommand{\FF}{\mathbb{F}}
\newcommand{\RR}{\mathbb{R}}
\newcommand{\mult}{\circ}
\newcommand{\Sym}{{\mathcal S}}
\newcommand{\al}{\alpha}
\newcommand{\be}{\beta}
\newcommand{\ga}{\gamma}
\newcommand{\la}{\lambda}
\newcommand{\de}{\delta}
\newcommand{\ov}[1]{\overline{#1}}
\newcommand{\tr}{\mathop{\rm tr}}
\newcommand{\Aut}{\mathop{\rm Aut}}
\newcommand{\G}{{\rm G}_2}
\newcommand{\SL}{{\rm SL}}
\newcommand{\matr}[4]{\left(\begin{array}{cc}
#1 & #2 \\
#3 & #4 \\
\end{array}\right)}
\newcommand{\OO}{\mathbf{O}}
\newcommand{\uu}{\mathbf{u}}
\newcommand{\vv}{\mathbf{v}}
\newcommand{\cc}{\mathbf{c}}
\newcommand{\zero}{\mathbf{0}}
\newcommand{\n}{\mathbf{n}}
\newcommand{\corr}[1]{{#1}}
\begin{document}
\title[On linear equations over split-octonions]{On linear equations over split-octonions}


\author{Artem Lopatin}
\address{Artem Lopatin\\
Universidade Estadual de Campinas (UNICAMP), 651 Sergio Buarque de Holanda, 13083-859 Campinas, SP, Brazil}
\email{dr.artem.lopatin@gmail.com (Artem Lopatin)}

\author{Alexandr N. Zubkov}
\address{Alexandr N. Zubkov\\
Sobolev Institute of Mathematics, Omsk Branch, Pevtzova 13, 644043 Omsk, Russia
}
\email{a.zubkov@yahoo.com (Alexandr N. Zubkov)}

\thanks{The work was supported by FAPESP 2023/17918-2}

\begin{abstract}
Over an algebraically closed field, we describe the affine varieties of solutions to the linear equations $a(xb)=c$ and $a(bx)=c$ over the split-octonions. We also determine the dimensions of the solution sets of arbitrary linear monomial equations in the split-octonions. Moreover, we show that if a linear monomial equation over the split-octonions with nonzero constant term has at least two solutions, then it necessarily possesses an invertible solution.

\noindent{\bf Keywords: } octonions, split-octonions, linear equation, positive characteristic.

\noindent{\bf 2020 MSC: } 17A75, 17D05, 20G41.
\end{abstract}

\maketitle

\section{Introduction}

Unless stated otherwise, let $\FF$ be a field of arbitrary characteristic $p = \Char\FF \geq 0$. All vector spaces and algebras are assumed to be defined over~$\FF$.

\subsection{Equations over octonions}

The problem of solving polynomial equations has historically been regarded as one of the central problems in mathematics, and its study contributed to the development of Algebraic Geometry and several other mathematical disciplines. Polynomial equations have been investigated not only over fields, but also over matrix algebras, as well as over the algebras of quaternions, octonions, and related structures. 

Rodríguez-Ordó\~nez~\cite{Rodriguez-Ordonez_2007} proved that every polynomial equation of positive degree over the algebra $\mathbf{A}_{\RR}$ of \emph{Cayley numbers} (that is, the division algebra of real octonions) containing only a single term of highest degree has at least one solution. Furthermore, Wang, Zhang, and Zhang~\cite{Wang_Zhang_2014} obtained an explicit algorithm for solving quadratic equations of the form $x^2 + bx + c = 0$ over $\mathbf{A}_{\RR}$, together with criteria determining whether such an equation admits one, two, or infinitely many solutions.

In general, an \emph{octonion algebra} (or \emph{Cayley algebra}) $\mathbf{C}$ over the field $\FF$ is an $8$-dimensional unital non-associative alternative algebra equipped with a nonsingular quadratic multiplicative form $\n : \mathbf{C} \to \FF$, called the \emph{norm} (see Section~\ref{section_general_octonion} for details). The norm $\n$ is said to be \emph{isotropic} if $\n(a)=0$ for some nonzero $a \in \mathbf{C}$, and \emph{anisotropic} otherwise. When $\n$ is anisotropic, the algebra $\mathbf{C}$ is a division algebra. If $\n$ is isotropic, then there exists a unique octonion algebra $\OO_{\FF}$ over $\FF$ with isotropic norm (see Theorem~1.8.1 of~\cite{Springer_Veldkamp_book_2000}); this algebra is referred to as the \emph{split-octonion algebra}. Moreover, if $\FF$ is algebraically closed, then every octonion algebra over $\FF$ is isomorphic to the split-octonion algebra $\OO_{\FF}$ (see Lemma~\ref{lemma_alg_closed} below).

Since an octonion algebra $\mathbf{C}$ is alternative, it is therefore power-associative, i.e., the subalgebra generated by a single element is associative. Hence, given $a \in \mathbf{C}$, the expression $a^n$ may be written without specifying the placement of brackets in the product. Flaut and Shpakivskyi~\cite{Flaut_Shpakivskyi_2015} studied the equation $x^n = a$ over real octonion division algebras. For an octonion division algebra $\mathbf{C}$ over an arbitrary field $\FF$, Chapman~\cite{Chapman_2020_JAA} presented a complete method for finding the solutions of the polynomial equation $a_n x^n + a_{n-1} x^{n-1} + \cdots + a_1 x + a_0 = 0$ over $\mathbf{C}$. Moreover, Chapman and Vishkautsan~\cite{Chapman_Vishkautsan_2022}, working over a division algebra $\mathbf{C}$, determined the solutions of the polynomial equation $(a_n c)x^n + (a_{n-1} c)x^{n-1} + \cdots + (a_1 c)x + (a_0 c) = 0$ and discussed the solutions of the polynomial equation $(c a_n)x^n + (c a_{n-1})x^{n-1} + \cdots + (c a_1)x + (c a_0) = 0$. Chapman and Levin~\cite{Chapman_Levin_2023} described a method for finding so-called ``alternating roots'' of polynomials over an arbitrary division Cayley--Dickson algebra. Chapman and Vishkautsan~\cite{Chapman_Vishkautsan_2025} examined, for a root $a$ of a polynomial $f(x)$ over a general Cayley--Dickson algebra, when a factorization $f(x)=g(x)(x-a)$ exists for some polynomial $g(x)$. Working over the split-octonions over an algebraically closed field, Lopatin and Rybalov~\cite{Lopatin_Rybalov_2025} solved all polynomial equations in which all coefficients except the constant term are scalar. As a consequence, the $n$-th roots of a split-octonion were computed.

The split-octonions have numerous applications in physics. For example, the Dirac equation, i.e., the equation of motion of a free spin $1/2$ particle such as an electron or proton, can be expressed in terms of split-octonions (see~\cite{Koplinger_2006, Gogberashvili_2006, Gogberashvili_2006_Dirac, Koplinger_2007}). Further applications of split-octonions occur in electromagnetic theory (see~\cite{Chanyal_2017_CommTP, Chanyal_Bisht_Negi_2013, Chanyal_Bisht_Negi_2011}), geometrodynamics (see~\cite{Chanyal_2015_RepMP}), unified quantum theories (see~\cite{Krasnov_2022, Bisht_Dangwal_Negi_2008, Castro_2007}), and special relativity (see~\cite{Gogberashvili_Sakhelashvili_2015}).

The case of polynomial equations over an arbitrary algebra was recently considered by Illmer and Netzer in~\cite{Illmer_Netzer_2024}, where conditions were established that guarantee the existence of a common solution to $n$ polynomial equations in $n$ variables, together with an application to polynomial equations over $\mathbf{A}_{\RR}$. 

Linear equations over matrices have been extensively studied in \cite{Eq_BK_79, Eq_BK_80, Eq_B_99, Eq_T_00, Eq_L_06, Eq_FLLT_19}, among many other works. 
The principal questions addressed in these works were 
\begin{enumerate}
\item[$\bullet$] determining conditions under which a solution to a given linear equation exists;
\item[$\bullet$] describing the general form of the solution.
\end{enumerate}
\noindent{}\corr{For particular matrix equations, further information can be found in the following recent surveys:
\begin{enumerate}
\item[$\bullet$] for the equation $AXB=C$, see~\cite{wang1};

\item[$\bullet$] for the generalized Sylvester equation $AX - YB = C$, see~\cite{wang2};

\item[$\bullet$] for the system of equations $AX=C$ and $XB=D$, see~\cite{wang3};

\item[$\bullet$]  for the system of equations $A_1 X B_1 = C_1$ and $A_2 X B_2 = C_2$, see~\cite{wang4}.
\end{enumerate}
}

\subsection{Results}

In this paper we consider the linear equations $ax = c$, $(ax)b = c$, and $a(bx) = c$ over the split-octonion algebra $\OO$, assuming that $\FF$ is algebraically closed. Note that over a division octonion algebra these equations can be solved easily, and for nonzero $a,b$ there exists a unique solution. Over an algebraically closed field, however, the situation is drastically different. Namely, let $\Omega_d$ denote the set of all affine subspaces (i.e., flats) in $\OO \cong \FF^8$ of dimension $1 \le d \le 8$, i.e.,
\[
\Omega_d = \{\, \mathcal{V} + a \mid \mathcal{V} \subset \OO \text{ is an }\FF\text{-subspace with } \dim \mathcal{V} = d \text{ and } a \in \OO \}.
\]
Moreover, for convenience, set $\Omega_0 = \OO$ and $\Omega_{-1} = \{\emptyset\}$. Given $Y \in \Omega_r$ for some $r \in \{-1,0,1,\ldots,8\}$, we define the dimension of $Y$ to be $\dim Y = r$.

The set $X$ of all solutions of one of the above linear equations with nonzero $a,b$ is either empty, consists of a single element, or satisfies $X \in \Omega_r$, where
\begin{enumerate}
\item[$\bullet$]  $r = 4$ when we consider the equation $ax = c$ (see Corollary~\ref{cor_AXisB});
\item[$\bullet$]  $r \in \{4,5,7\}$ when we consider the equation $(ax)b = c$ (see Corollary~\ref{cor_AXBisC});
\item[$\bullet$]  $r \in \{4,6,8\}$ when we consider the equation $a(bx) = c$ (see Corollary~\ref{cor_ABXisC}).
\end{enumerate}

\noindent{}Moreover, the set $X$ of all solutions of the general linear monomial equation $w(x)=c$, where $w(x)$ is a product of a variable $x$ and coefficients $a_1,\ldots,a_m \in \OO$ (see Section~\ref{section_monom} for the definition), is either
\begin{enumerate}
\item[$\bullet$] empty, or
\item[$\bullet$] consists of a single element, or
\item[$\bullet$] lies in $\Omega_r$ for some $4 \le r \le 7$, or
\item[$\bullet$] coincides with $\OO$,
\end{enumerate}
see Corollary~\ref{cor_minom_eq}. As a consequence, in Corollary~\ref{cor_invertible} we show that if a linear monomial equation over octonions with nonzero constant term has at least two solutions, then it possesses an invertible solution. We also study how the dimension of the solution set changes under degeneration of a linear monomial equation in Corollary~\ref{cor_degenerate}.

To solve a linear equation $ax = c$, we consider the action of an automorphism $g \in \Aut(\OO)$ on this equation, yielding $(ga)(gx) = gc$, such that $ga$ is a canonical non-invertible octonion (see Definition~\ref{def_canon}). Similarly, to solve the equation $(ax)b = c$ (or $a(bx) = c$, respectively), we consider the action of an automorphism $g \in \Aut(\OO)$ on the equation so that $(ga, gb)$ is a pair of canonical non-invertible octonions (see Definition~\ref{def_canon}). Hence, our solution method is based on the classification of pairs of octonions obtained by Lopatin and Zubkov in~\cite{LZ_1} (see Theorem~\ref{theo_O2_canon} below)\corr{, where the split-octonion algebra $\OO$ is referred to as the octonion algebra}. Note that the problem of separating $\Aut(\OO)$-orbits on pairs of octonions remains open, but closed $\Aut(\OO)$-orbits in $\OO^n$ can be distinguished by polynomial invariants (see~\cite{schwarz1988}, \cite{zubkov2018}, \cite{LZ_2} for details). We provide explicit solutions for the equations $(ax)b = c$ and $a(bx) = c$ with non-invertible canonical $(a,b)$ in Theorems~\ref{theo_AXBisC} and \ref{theo_ABXisC}.

Hence, our solution method is based on the classification of pairs of octonions obtained by Lopatin and Zubkov in~\cite{LZ_1}, where the split-octonion algebra $\OO$ is referred to as the octonion algebra.

In Sections~\ref{section_O}--\ref{section_degenerate}, with the exception of Section~\ref{section_general_octonion}, we assume that the field $\FF$ is algebraically closed. In Section~\ref{section_O} we explicitly define the octonion algebra $\OO$. Additional properties of octonions, together with key notations for certain sets of octonions, are provided in Section~\ref{section_def}. The solutions of the equations $ax = c$, $(ax)b = c$, and $a(bx) = c$ are studied in Sections~\ref{section_AXisC}, \ref{section_AXBisC}, and \ref{section_ABXisC}, respectively. Finally, in Section~\ref{section_monom} we investigate the solution sets of linear monomial equations, and in Section~\ref{section_degenerate} we examine degenerations of linear equations.

\section{Octonions}\label{section_O}

\corr{The definitions in this section are adopted from~\cite{LZ_2}.}

\subsection{Split-octonions} 

The {\it split-octonion algebra} $\OO=\OO(\FF)$, also known as the {\it split Cayley algebra}, is the vector space of all matrices

$$a=\matr{\al}{\uu}{\vv}{\be}\text{ with }\al,\be\in\FF \text{ and } \uu,\vv\in\FF^3,$$%
together with the multiplication given by the following formula:
$$a a'  =
\matr{\al\al'+ \uu\cdot \vv'}{\al \uu' + \be'\uu - \vv\times \vv'}{\al'\vv +\be\vv' + \uu\times \uu'}{\be\be' + \vv\cdot\uu'},\text{ where } a'=\matr{\al'}{\uu'}{\vv'}{\be'},$$%
$\uu\cdot \vv = u_1v_1 + u_2v_2 + u_3v_3$ and $\uu\times \vv = (u_2v_3-u_3v_2, u_3v_1-u_1v_3, u_1v_2 - u_2v_1)$. For short, we denote  $\cc_1=(1,0,0)$, $\cc_2=(0,1,0)$,  $\cc_3=(0,0,1)$, $\zero=(0,0,0)$ from $\FF^3$. Consider the following basis of $\OO$: $$e_1=\matr{1}{\zero}{\zero}{0},\; e_2=\matr{0}{\zero}{\zero}{1},\; \uu_i=\matr{0}{\cc_i}{\zero}{0},\;\vv_i=\matr{0}{\zero}{\cc_i}{0}$$
for $i=1,2,3$. The unity of $\OO$ is denoted by $1_{\OO}=e_1+e_2$.  We identify octonions
$$\al 1_{\OO},\;\matr{0}{\uu}{\zero}{0},\; \matr{0}{\zero}{\vv}{0}$$
with $\al\in\FF$, $\uu,\vv\in\FF^3$, respectively. Note that $\uu_i \uu_j= (-1)^{\epsilon_{ij}}\vv_k$ and  $\vv_i \vv_j= (-1)^{\epsilon_{ji}}\uu_k$, where $\{i,j,k\}=\{1,2,3\}$ and $\epsilon_{ij}$ is the parity of permutation 
$\left(
\begin{array}{ccc}
\!1\! & \!2\! & \!3\! \\
\!k\! & \!i\! & \!j\! \\
\end{array}
\right)$.


The algebra $\OO$ is endowed with the linear involution
$$\ov{a}=\matr{\be}{-\uu}{-\vv}{\al},$$%
satisfying  $\ov{aa'}=\ov{a'}\ov{a}$, a {\it norm} $n(a)=a\ov{a}=\al\be-\uu\cdot \vv$, and a non-degenerate symmetric bilinear {\it form} $q(a,a')=n(a+a')-n(a)-n(a')=\al\be' + \al'\be - \uu\cdot \vv' - \uu'\cdot \vv$. Define the linear function {\it trace} by $\tr(a)=a + \ov{a} = \al+\be$. The subspace of traceless octonions is denoted by $\OO_0 = \{a\in\OO\, | \,\tr(a)=0\}$ and the affine variety of octonions with zero norm is denoted by $\OO_{\#}=\{a\in\OO\,|\,n(a)=0\}$. Notice that
\begin{eq}\label{eq1}
\tr(aa')=\tr(a'a) \text{ and } n(aa')=n(a)n(a').
\end{eq}%
The next quadratic equation holds:
\begin{eq}\label{eq2}
a^2 - \tr(a) a + n(a) = 0.
\end{eq}%
The algebra $\OO$ is a simple {\it alternative} algebra, i.e., the following identities hold for $a,b\in\OO$:
\begin{eq}\label{eq4}
a(ab)=(aa)b,\;\; (ba)a=b(aa).
\end{eq}%
Moreover, 
\begin{eq}\label{eq4b}
\ov{a}(ab) = n(a) b, \;\; (ba)\ov{a}=n(a) b.  
\end{eq}%

The following remark is well-known and can easily be proven.

\begin{remark}\label{remark_inv}
Given an $a\in \OO$, one of the following cases holds:
\begin{enumerate}
\item[$\bullet$] if $n(a)\neq0$, then there exist a unique $b,c\in\OO$ such that $ba=1_{\OO}$ and $ac=1_{\OO}$; moreover, in this case we have $b=c=\ov{a}/n(a)$ and we denote $a^{-1}=b=c$.  

\item[$\bullet$] if $n(a)=0$, then $a$ does not have a left inverse as well as a right inverse.
\end{enumerate}
\end{remark}

Equalities~(\ref{eq4b}) imply that for $a\not\in\OO_{\#}$ we have
\begin{eq}\label{eq_inv}
a^{-1}(ab) = b, \;\; (ba)a^{-1}=b.    
\end{eq}

\corr{
\subsection{The group $\G$}\label{section_G2} The group $\Aut(\OO)$ of all automorphisms of the algebra $\OO$ is the exceptional simple group $\G=\G(\FF)$. The group $\G$ contains a Zariski closed subgroup $\SL_3=\SL_3(\FF)$. Namely, every $g\in\SL_3$ defines an automorphism of $\OO$ as follows:
$$a\to \matr{\al}{\uu g}{\vv g^{-T}}{\be},$$
where $g^{-T}$ stands for $(g^{-1})^T$ and $\uu,\vv\in\FF^3$ are considered as row vectors. For every $\uu,\vv\in \FF^3$ define $\de_1(\uu),\de_2(\vv)\in \Aut(\OO)$ as follows:
$$\de_1(\uu)(a')=\matr{\al' - \uu\cdot \vv'}{(\al'-\be' - \uu\cdot \vv')\uu + \uu'}{\vv' - \uu'\times \uu}{\be' + \uu\cdot\vv'},$$
$$\de_2(\vv)(a')=\matr{\al' + \uu'\cdot \vv}{\uu' + \vv'\times \vv}{(-\al'+\be' - \uu'\cdot \vv)\vv + \vv'}{\be' - \uu'\cdot\vv}.$$
The group $\G$ is generated by  $\SL_3$ and $\de_1(t\uu_i),\de_2(t\vv_i)$ for all $t\in\FF$ and $i=1,2,3$. As an example, by straightforward calculations we can see that
\begin{eq}
\hbar:\OO\to\OO, \text{ defined by }a \to \matr{\be}{-\vv}{-\uu}{\al},
\end{eq}%
belongs to $\G$.

The action of $\G$ on $\OO$ satisfies the following properties:
$$\ov{ga}=g\ov{a},\; \tr(ga)=\tr(a),\;n(ga)=n(a),\; q(ga,ga')=q(a,a'),$$%
for all $g \in \Aut(\OO)$ and $a,a' \in \OO$. Thus, $\G$ also acts on $\OO_0$ and $\OO_{\#}$. Moreover, the group $\G$  acts diagonally on the vector space $\OO^n=\OO\oplus \cdots\oplus\OO$ ($n$ times) by $g(a_1,\ldots,a_n)=(ga_1,\ldots,g a_n)$
for all $g\in \G$ and $a_1,\ldots,a_n\in\OO$. 
}

\subsection{General octonions}\label{section_general_octonion}

An {\it octonion algebra} $\mathbf{C}$ (or a {\it Cayley algebra}) over an arbitrary field $\FF$ is a non-associative alternative unital algebra of dimension $8$, endowed with non-singular quadratic multiplicative form $n: \mathbf{C} \to \FF$, which is called the {\it norm}. As above, define over $\mathbf{C}$ the symmetric bilinear form $q(a,a')=n(a+a')-n(a) -n(a')$, the linear involution $\ov{a}=q(a,1_{\mathbf{C}})1_{\mathbf{C}} - a$, and the linear trace $\tr(a)=q(a,1_{\mathbf{C}})$. Then equality~(\ref{eq2}) also holds for every $a\in\mathbf{C}$, $\tr(a)1_{\mathbf{C}}=a+\ov{a}$ and $n(a)1_{\mathbf{C}}=a\ov{a}=\ov{a}a$. The following lemma is well-known.

\begin{lemma}\label{lemma_alg_closed}
Assume that the field $\FF$ is algebraically closed. Then any  octonion algebra $\mathbf{C}$ over $\FF$ is isomorphic to the split-octonion algebra $\OO_{\FF}$. 
\end{lemma}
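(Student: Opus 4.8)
The plan is to show that an octonion algebra over an algebraically closed field cannot have an anisotropic norm, which forces the norm to be isotropic and hence, by the uniqueness quoted from Theorem 1.8.1 of \cite{Springer_Veldkamp_book_2000}, forces $\mathbf{C}\cong\OO_{\FF}$. So the whole argument reduces to proving isotropy of $n$. First I would recall that $n$ is a non-singular quadratic form on the $8$-dimensional space $\mathbf{C}$, in particular it is not identically zero, so there is some $a\in\mathbf{C}$ with $n(a)\neq 0$; rescaling we may even assume $a$ is not a scalar multiple of $1_{\mathbf{C}}$ (if every $a$ with $n(a)\neq0$ were a scalar, non-singularity would fail in dimension $8$).

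Next I would exploit equality~(\ref{eq2}): for any $a\in\mathbf{C}$ we have $a^2-\tr(a)a+n(a)1_{\mathbf{C}}=0$. Pick $a\notin\FF 1_{\mathbf{C}}$; then $\FF[a]$ is a commutative associative (by alternativity, the subalgebra generated by one element is associative) unital subalgebra of dimension $2$, and $a$ satisfies a monic quadratic polynomial $t^2-\tr(a)t+n(a)$ over $\FF$. Since $\FF$ is algebraically closed, this polynomial splits: $t^2-\tr(a)t+n(a)=(t-\lambda)(t-\mu)$ with $\lambda,\mu\in\FF$ and $\lambda\mu=n(a)$. Consider $b=a-\lambda 1_{\mathbf{C}}\neq 0$; then $n(b)=n(a-\lambda1_{\mathbf{C}})$, and using $n(b)1_{\mathbf{C}}=b\ov b$ together with $\ov b = \tr(b)1_{\mathbf{C}}-b = (\tr(a)-2\lambda)1_{\mathbf{C}}-(a-\lambda1_{\mathbf{C}})=(\mu-\lambda)1_{\mathbf{C}}-(a-\lambda1_{\mathbf{C}})$, a short computation gives $b\ov b=(a-\lambda1_{\mathbf{C}})(\mu 1_{\mathbf{C}}-a)$, and expanding with $a^2=\tr(a)a-n(a)1_{\mathbf{C}}=(\lambda+\mu)a-\lambda\mu 1_{\mathbf{C}}$ yields $b\ov b=0$, i.e.\ $n(b)=0$ with $b\neq 0$. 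Hence $n$ is isotropic.

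With isotropy established, I would invoke Theorem 1.8.1 of \cite{Springer_Veldkamp_book_2000} as quoted in the introduction: up to isomorphism there is a unique octonion algebra over $\FF$ with isotropic norm, namely $\OO_{\FF}$, so $\mathbf{C}\cong\OO_{\FF}$, completing the proof.

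The only mildly delicate point — the main obstacle, such as it is — is making sure the quadratic-polynomial argument genuinely produces a \emph{nonzero} isotropic vector rather than collapsing in characteristic $2$ (where $\tr(a)1_{\mathbf{C}}=a+\ov a$ can behave badly) or when $\lambda=\mu$. One has to check that for a suitable choice of $a\notin\FF1_{\mathbf{C}}$ the element $b=a-\lambda1_{\mathbf{C}}$ is nonzero; this is automatic since $a\notin\FF1_{\mathbf{C}}$. In characteristic $2$ the same computation goes through verbatim because it only uses $a^2=(\lambda+\mu)a-\lambda\mu1_{\mathbf{C}}$ and the definition $\ov a=\tr(a)1_{\mathbf{C}}-a$; alternatively, since this lemma is explicitly labeled ``well-known,'' one may simply cite the uniqueness theorem together with the standard fact that over an algebraically closed field every non-singular quadratic form in dimension $\geq 2$ is isotropic, which immediately gives that $n$ is isotropic and finishes the proof.
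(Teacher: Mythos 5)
Your proof is correct and follows essentially the same route as the paper: both arguments use the quadratic relation $a^2-\tr(a)a+n(a)1_{\mathbf{C}}=0$ for some $a\notin\FF 1_{\mathbf{C}}$ together with algebraic closedness to produce a root $\lambda$ with $n(a-\lambda 1_{\mathbf{C}})=0$, showing the norm must be isotropic, and then invoke the uniqueness of the octonion algebra with isotropic norm. The only difference is presentational — the paper phrases it as a contradiction under the anisotropy assumption, while you exhibit the isotropic vector directly.
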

\begin{proof}
If the norm $n$ on $\mathbf{C}$ is isotropic, then $\mathbf{C}$  is isomorphic to $\OO_{\FF}$. 

Assume that $n(a)\neq 0$ for every nonzero $a\in\mathbf{C}$. Consider some nonzero $a\in\mathbf{C}$ such that $a\neq \la 1_{\mathbf{C}}$ for every $\la\in\FF$. Denote $\al=n(a)\neq0$. Then 
$$n(a-\la 1_{\mathbf{C}})1_{\mathbf{C}} = (a-\la 1_{\mathbf{C}})(\ov{a}-\la 1_{\mathbf{C}}) = a \ov{a} -\la \tr(a) 1_{\mathbf{C}} + \la^2 1_{\mathbf{C}} = (\la^2 - \la \tr(a) + \al)1_{\mathbf{C}}.$$ 
Since $\FF$ is algebraically closed, then there exists $\la\in\FF$ such that $n(a-\la 1_{\mathbf{C}})=0$. Since $a-\la 1_{\mathbf{C}}$ is nonzero, we obtain a contradiction.
\end{proof}

\subsection{Notations}
We fix a binary relation $<$ on the field $\FF$ such that 
\begin{enumerate}
\item[$\bullet$] for each pair $\al,\be\in \FF$ with $\al\neq\be$ exactly one of $\al<\be$ or $\be<\al$ holds;

\item[$\bullet$] $a<0$ for every nonzero $\al\in\FF$.
\end{enumerate} 
Note that we do not assume that $<$ is transitive, and we do not assume compatibility with the field operations. Denote by $\FF^{\times}$ the set of all nonzero elements of $\FF$. We  write $a=(\al_1,\ldots,\al_8)$, where $\al_i\in\FF$, for an octonion
$$
a=\matr{\al_{1}}{(\al_{2}, \al_{3}, \al_{4})}{(\al_{5}, \al_{6}, \al_{7})}{\al_{8}}\in\OO.
$$%

\section{Classification of octonions}\label{section_def}

\subsection{Sets of octonions}

Introduce the following sets of diagonal octonions:
$$ {\rm(D)}\; \matr{\al_1}{\zero}{\zero}{\al_8},\qquad\quad 
{\rm(E)}\; \al_1 1_{\OO}, \qquad\quad
{\rm(F)}\; \matr{\al_1}{\zero}{\zero}{\al_8} \text{ with }\al_1\neq \al_8,
$$
where $\al_1,\al_8\in\FF$. Note that set (D) is the union of sets (E) and (F). Given $a\in\OO$, denote by $a^{\top}$ the transpose octonion of $a$, i.e.
$$a^{\top}=\matr{\al}{\vv}{\uu}{\be} \text{ for }a=\matr{\al}{\uu}{\vv}{\be}.$$
\noindent{}Consider the following sets of non-diagonal octonions:
\begin{enumerate}

\item[(K)] $\matr{\al_1}{(1,0,0)}{\zero}{\al_8}$,

\item[(L)] $\matr{\al_1}{(\al_2,0,0)}{\zero}{\al_8}$ with $\al_2\neq0$,

\item[(M)] $\matr{\al_1}{(0,1,0)}{\zero}{\al_8}$,

\item[(N)] $\matr{\al_1}{(1,0,0)}{(\al_5,0,0)}{\al_8}$  with $\al_5\neq0$,

\item[(${\mathrm{P}}$)] $\matr{\al_1}{(1,0,0)}{(0,1,0)}{\al_8}$,
\end{enumerate}
for $\al_1,\al_2,\al_5,\al_8\in\FF$.  Given some set (A) of octonions, we denote by
\begin{enumerate}
\item[($\mathrm{A}_0$)] the set of octonions $a\in\OO_0$ of set (A);

\item[($\mathrm{\ov{A}}$)] the set of octonions $a\in\OO$ of set (A) with $\al_1\leq\al_8$;

\item[($\mathrm{A}_1$)] the set of octonions $a\in\OO$ of set (A) with $\al_1=\al_8$;

\item[($\mathrm{A}^{\!\top}$)] the set of octonions $a^{\top}$ such that $a$ has set (A);

\item[($\mathrm{A}_0^{\!\top}$)] the set of octonions $a^{\top}$ such that $a\in\OO_0$ has set (A);

\item[($\mathrm{A}_1^{\!\top}$)] the set of octonions $a^{\top}$ such that $a$ has set (A) with $\al_1=\al_8$.
\end{enumerate}
\noindent{}Note that in case $\Char{\FF}=2$ sets ($\mathrm{A}_0$) and ($\mathrm{A}_1$) are the same.

\begin{prop}[Proposition 3.3 from~\cite{LZ_1}]\label{prop_O_canon}
 The following set is a minimal set of representatives of $\G$-orbits on $\OO$: 
\begin{enumerate}
\item[{\rm(E)}] $\al_1 1_{\OO}$,

\item[{\rm($\mathrm{\ov{F}}$)}] $\matr{\al_1}{\zero}{\zero}{\al_8}$ with $\al_1<\al_8$,

\item[{\rm($\mathrm{K}_1$)}] $\matr{\al_1}{(1,0,0)}{\zero}{\al_1}$,
\end{enumerate}
where $\al_1,\al_8\in\FF$. 
\end{prop}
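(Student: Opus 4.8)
The plan is to reduce any octonion $a \in \OO$ to one of the listed canonical forms by acting with a suitable automorphism $g \in \G$, using the fact that $\G$ preserves the trace and the norm. First I would write $\tr(a) = \al + \be =: t$ and note that, modulo the center $\FF 1_{\OO}$, we may shift $a$ by $-\frac{t}{2}1_{\OO}$ only when $p \neq 2$; in general the relevant invariants are $\tr(a)$ and $n(a)$ (or equivalently the pair $(\al_1, \al_8)$ up to the relation imposed by $<$), together with the ``type'' of the octonion measured by how far $a - \frac{\tr(a)}{2}1_{\OO}$ (or, characteristic-free, the nilpotent/semisimple structure of the subalgebra $\FF[a]$) is from being semisimple. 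Concretely, the subalgebra $\FF[a] \cong \FF[t]/(t^2 - \tr(a)t + n(a))$ by~(\ref{eq2}), so the $\G$-orbit of $a$ should be governed by the isomorphism type of this quadratic algebra plus finer data when $a$ is not a scalar.

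The key steps, in order. (1) Handle the scalar case: if $a = \al_1 1_{\OO}$, it is fixed by all of $\G$ and lands in family (E); these are distinct for distinct $\al_1$. (2) For non-scalar $a$, consider $a_0 = a - \al_1 1_{\OO}$ where $\al_1$ is a chosen eigenvalue-like scalar; I would split according to whether $\FF[a]$ is split étale (two distinct ``eigenvalues'' $\al_1 \neq \al_8$), in which case one shows $a$ is $\G$-conjugate to the diagonal idempotent-combination $\diag(\al_1, \al_8)$, giving family (F), and the ordering $\al_1 < \al_8$ picks a unique representative from the two diagonal forms $\diag(\al_1,\al_8)$ and $\diag(\al_8,\al_1)$ (which are $\G$-conjugate via an automorphism swapping $e_1 \leftrightarrow e_2$). (3) If $\FF[a]$ is not étale, i.e. $a - \al_1 1_{\OO}$ is nilpotent with $\al_1 = \al_8 = \tr(a)/2$ (or $\tr(a)=2\al_1$ in characteristic $2$), then $a = \al_1 1_{\OO} + \nu$ with $\nu^2 = 0$, $\nu \neq 0$; here I would use that $\G$ acts transitively on the set of nonzero square-zero traceless octonions of the appropriate rank, reducing $\nu$ to $\uu_1 = \matr{0}{(1,0,0)}{\zero}{0}$, which yields family ($\mathrm{K}_1$). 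The transitivity of $\G$ on such $\nu$ is the statement that a short root vector is unique up to $\G$; this can be extracted from the explicit generators of $\G$ in~\cite{zubkov2018} or from the classification of pairs in~\cite{LZ_1}.

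The main obstacle will be step (3): proving that $\G$ acts transitively on the nonzero nilpotent octonions of the form $\matr{0}{\uu}{\vv}{0}$ with $(\uu \cdot \vv) = 0$ that generate the same quadratic subalgebra, and doing so uniformly in the characteristic (including $p = 2$, where the distinction between $(\mathrm{A}_0)$ and $(\mathrm{A}_1)$ collapses and one must be careful that $\tr(a) = 2\al_1$ is the genuine constraint rather than $\al_1 = \al_8$ separately). I would dispatch this by a direct computation with the standard generators of $\G$ acting on $\FF^3 \oplus \FF^3$: the $\GL_3$-type automorphisms act on $\uu$ by $\uu \mapsto A\uu$ and on $\vv$ by $\vv \mapsto (A^{-1})^{\top}\vv$ for $A \in \SL_3$, and the remaining ``exponential'' generators mix the $\uu$- and $\vv$-blocks; together these suffice to move any nonzero isotropic $\uu$ to $\cc_1$ and then kill the $\vv$-component. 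Finally I would verify minimality: octonions in (E), ($\mathrm{\ov{F}}$), ($\mathrm{K}_1$) are pairwise non-conjugate because they are separated by the invariants $(\tr, n)$ together with whether $\FF[a]$ is reduced, and within ($\mathrm{\ov{F}}$) the constraint $\al_1 < \al_8$ makes the representative unique.
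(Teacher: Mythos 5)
The paper does not prove this proposition at all: it is quoted verbatim as Proposition~3.3 of~\cite{LZ_1}, so there is no internal argument to compare yours against. Judged on its own, your outline follows what is surely the standard route (separate $a$ by the isomorphism type of $\FF[a]\cong\FF[t]/(t^2-\tr(a)t+n(a))$, which over an algebraically closed field splits into the scalar, split-\'etale and non-reduced cases, then use $\G$-transitivity statements to normalize, and separate orbits by $\tr$, $n$ and reducedness of $\FF[a]$), and the minimality argument you give is sound, including in characteristic $2$ where $\al_1$ is recovered from $n(a)=\al_1^2$ by injectivity of Frobenius.

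Two ingredients still need actual proofs, and one of them you do not even flag. In the \'etale case you need that $\G$ acts transitively on nontrivial idempotents of $\OO$ (equivalently on octonions with $\tr=1$, $n=0$ that are not scalars), since $a=\al_8 1_{\OO}+(\al_1-\al_8)f$ with $f$ a nontrivial idempotent; you assert the conjugation to $\diag(\al_1,\al_8)$ without indicating how, and the shift-by-$\tr(a)/2$ device you mention is unavailable in characteristic $2$, so this case needs its own characteristic-free argument (or its own computation with the generators of $\G$, parallel to the nilpotent case). In the non-reduced case, a square-zero octonion is of the form $\matr{\al}{\uu}{\vv}{-\al}$ with $\al^2+\uu\cdot\vv=0$, not necessarily with zero diagonal as you write; so the transitivity you must establish is on all nonzero square-zero elements, and your computation with the $\SL_3$-type generators (acting by $\uu\mapsto A\uu$, $\vv\mapsto (A^{-1})^{\top}\vv$) must be supplemented by the block-mixing generators to first kill $\al$ before moving $\uu$ to $\cc_1$ and annihilating $\vv$. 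Both facts are classical and provable by the method you indicate, so the plan is viable, but as written these are the genuine gaps between your sketch and a complete proof of the cited classification.
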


\begin{theo}[Theorem 4.4 from~\cite{LZ_1}]\label{theo_O2_canon} The following set is a minimal set of representatives of $\G$-orbits on $\OO^2$: 
\begin{enumerate}
\item[{\rm (EE)}] $(\al_1 1_{\OO}, \be_1 1_{\OO})$, 

\item[{\rm (E$\mathrm{\ov{F}}$)}] $\left(\al_1 1_{\OO}, \matr{\be_1}{\zero}{\zero}{\be_8}\right)$ with $\be_1<\be_8$,

\item[{\rm($\rm E K_1$)}] $\left(\al_1 1_{\OO}, \matr{\be_1}{(1,0,0)}{\zero}{\be_1}\right)$,

\item[{\rm ($\mathrm{\ov{F}}$D)}] $\left(\matr{\al_1}{\zero}{\zero}{\al_8}, \matr{\be_1}{\zero}{\zero}{\be_8}\right)$ with $\al_1<\al_8$,

\item[{\rm($\mathrm{\ov{F}}$K)}] $\left(\matr{\al_1}{\zero}{\zero}{\al_8}, \matr{\be_1}{(1,0,0)}{\zero}{\be_8}\right)$ with $\al_1<\al_8$,

\item[{\rm($\mathrm{\ov{F} K^{\!\top}}$)}] $\left(\matr{\al_1}{\zero}{\zero}{\al_8}, \matr{\be_1}{\zero}{(1,0,0)}{\be_8}\right)$ with $\al_1<\al_8$,

\item[{\rm($\mathrm{\ov{F}}$N)}] $\left(\matr{\al_1}{\zero}{\zero}{\al_8}, \matr{\be_1}{(1,0,0)}{(\be_5,0,0)}{\be_8}\right)$ with $\al_1 <\al_8$ and $\be_5\neq0$,

\item[{\rm($\mathrm{\ov{F}}$P)}] $\left(\matr{\al_1}{\zero}{\zero}{\al_8}, \matr{\be_1}{(1,0,0)}{(0,1,0)}{\be_8}\right)$ with $\al_1< \al_8$,

\item[{\rm($\rm K_1E$)}] $\left(\matr{\al_1}{(1,0,0)}{\zero}{\al_1}, \be_1 1_{\OO}\right)$,

\item[{\rm($\rm K_1F)$}] $\left(\matr{\al_1}{(1,0,0)}{\zero}{\al_1}, \matr{\be_1}{\zero}{\zero}{\be_8}\right)$ with $\be_1\neq \be_8$,

\item[{\rm($\rm K_1L_1$)}] $\left(\matr{\al_1}{(1,0,0)}{\zero}{\al_1}, \matr{\be_1}{(\be_2,0,0)}{\zero}{\be_1}\right)$ with $\be_2\neq 0$,

\item[{\rm($\rm K_1\ov{L}^{\!\top}$)}] $\left(\matr{\al_1}{(1,0,0)}{\zero}{\al_1}, \matr{\be_1}{\zero}{(\be_5,0,0)}{\be_8}\right)$ with $\be_1\leq \be_8$ and $\be_5\neq 0$,

\item[{\rm($\rm K_1\ov{M}$)}] $\left(\matr{\al_1}{(1,0,0)}{\zero}{\al_1}, \matr{\be_1}{(0,1,0)}{\zero}{\be_8}\right)$ with $\be_1\leq \be_8$,

\item[{\rm($\rm K_1M_1^{\!\top}$)}] $\left(\matr{\al_1}{(1,0,0)}{\zero}{\al_1}, \matr{\be_1}{\zero}{(0,1,0)}{\be_1}\right)$,
\end{enumerate}
where $\al_1,\al_8,\be_1,\be_2,\be_5,\be_8\in\FF$.
\end{theo}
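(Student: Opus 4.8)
The plan is to derive the classification of $\G$-orbits on $\OO^2$ from Proposition~\ref{prop_O_canon} by first normalising the left component of a pair and then analysing the stabiliser of that component. Given $(a,b)\in\OO^2$, choose $g\in\G$ with $ga$ in one of the forms {\rm(E)}, {\rm ($\mathrm{\ov{F}}$)}, {\rm ($\mathrm{K}_1$)} of Proposition~\ref{prop_O_canon} and replace $(a,b)$ by $g(a,b)$. Since those forms represent pairwise distinct $\G$-orbits, two pairs with normalised left components can be $\G$-equivalent only if their left components coincide, and then via an element of the stabiliser $\mathrm{Stab}_{\G}(a)=\{g\in\G\mid ga=a\}$. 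So it remains, for each of the three forms of $a$, to describe $\mathrm{Stab}_{\G}(a)$ and classify its orbits on the right component, and then to verify that the resulting list of pairs has no repetitions. For form {\rm(E)}, $a=\al_1 1_{\OO}$ is a scalar, hence fixed by every automorphism, so $\mathrm{Stab}_{\G}(a)=\G$ and Proposition~\ref{prop_O_canon} applied to $b$ produces the families {\rm (EE)}, {\rm (E$\mathrm{\ov{F}}$)}, {\rm($\rm E K_1$)}.

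For form {\rm ($\mathrm{\ov{F}}$)}, $a=\matr{\al_1}{\zero}{\zero}{\al_8}$ with $\al_1\neq\al_8$; the orthogonal idempotents $e_1,e_2$ are then polynomials in $a$, so $\mathrm{Stab}_{\G}(a)=\mathrm{Stab}_{\G}(e_1)$, which is isomorphic to $\SL_3(\FF)$ and acts on the Peirce decomposition $\OO=\FF e_1\oplus\FF e_2\oplus U\oplus V$ (with $U$ spanned by the $\uu_i$ and $V$ spanned by the $\vv_i$, each $\cong\FF^3$) trivially on $\FF e_1\oplus\FF e_2$, by the natural representation on $U$, and by its dual on $V$. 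Writing $b=\matr{\be_1}{\uu}{\vv}{\be_8}$, the scalars $\be_1,\be_8$ remain fixed, so the task reduces to the elementary classification of $\SL_3(\FF)$-orbits on $\FF^3\oplus(\FF^3)^{*}$: the zero pair; a nonzero vector with zero covector; the zero vector with a nonzero covector; a pair of nonzero elements with $\uu\cdot\vv\neq0$ (brought to $\uu=\cc_1$, $\vv=\be_5\cc_1$, with invariant $\be_5=\uu\cdot\vv$); and a pair of nonzero elements with $\uu\cdot\vv=0$. These produce exactly {\rm ($\mathrm{\ov{F}}$D)}, {\rm($\mathrm{\ov{F}}$K)}, {\rm($\mathrm{\ov{F} K^{\!\top}}$)}, {\rm($\mathrm{\ov{F}}$N)}, {\rm($\mathrm{\ov{F}}$P)}.

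The main obstacle is form {\rm ($\mathrm{K}_1$)}: here $a=\al_1 1_{\OO}+\uu_1$ with $\uu_1^2=0$, and since $\al_1$ is recovered from $a$ via $n(a)=\al_1^2$ we have $\mathrm{Stab}_{\G}(a)=\mathrm{Stab}_{\G}(\uu_1)$, the stabiliser of a nonzero traceless octonion of zero norm (an element of the smallest nontrivial $\G$-orbit on $\OO$, cf. Proposition~\ref{prop_O_canon}). Unlike the two previous cases this is a non-reductive closed subgroup, so there is no shortcut through classical representation theory. I would write down explicit generators of $\mathrm{Stab}_{\G}(\uu_1)$ from the generators of $\G$ recorded in Section~3 of~\cite{zubkov2018}, and then carry out a step-by-step normalisation of $b=\matr{\be_1}{\uu}{\vv}{\be_8}$ --- first reducing $\vv$, then $\uu$, then the scalar entries $\be_1,\be_8$ --- the argument branching on which coordinates of $\uu$ and $\vv$ survive and on the values of $\uu\cdot\vv$ and $q(a,b)$. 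This should produce the six remaining families {\rm($\rm K_1E$)}, {\rm($\rm K_1F)$}, {\rm($\rm K_1L_1$)}, {\rm($\rm K_1\ov{L}^{\!\top}$)}, {\rm($\rm K_1\ov{M}$)}, {\rm($\rm K_1M_1^{\!\top}$)}; the conditions $\be_1\leq\be_8$ in {\rm($\rm K_1\ov{L}^{\!\top}$)} and {\rm($\rm K_1\ov{M}$)} come from the convention on $<$, which is needed because the stabiliser of those two configurations still permits an interchange $\be_1\leftrightarrow\be_8$.

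It remains to establish irredundancy. Pairs from different left-component blocks are inequivalent because their left components lie in distinct $\G$-orbits by Proposition~\ref{prop_O_canon}. Within a block I would separate the representatives using the $\G$-invariant functions $\tr(b)$, $n(b)$, $\tr(ab)$ and $q(a,b)$, supplemented --- in the {\rm ($\mathrm{K}_1$)} block, where not every $\G$-orbit on $\OO^2$ is closed and polynomial invariants need not suffice --- by structural data such as the dimension of the subalgebra generated by $a$ and $b$ and its stability under the involution; together with the convention on $<$, which removes the only remaining ambiguities ($\al_1\leftrightarrow\al_8$ and $\be_1\leftrightarrow\be_8$), this shows the stated list is a minimal set of representatives of $\G$-orbits on $\OO^2$.
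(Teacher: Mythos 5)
The paper itself offers no proof of this statement: it is imported verbatim as Theorem~4.4 of~\cite{LZ_1}, just as Proposition~\ref{prop_O_canon} is, so the only meaningful comparison is with the argument in that reference, which is indeed organized along the lines you propose (normalize the first entry, then classify orbits of its stabilizer on the second entry). Your first two blocks are sound in outline: for $a=\al_1 1_{\OO}$ the stabilizer is all of $\G$ and Proposition~\ref{prop_O_canon} applied to $b$ gives {\rm (EE)}, {\rm (E$\mathrm{\ov{F}}$)}, {\rm($\rm EK_1$)}; for $a$ diagonal with $\al_1\neq\al_8$ the stabilizer fixes $e_1,e_2$, is isomorphic to $\SL_3(\FF)$ acting on the Peirce pieces by the natural and dual representations, and your list of $\SL_3$-orbits on $\FF^3\oplus(\FF^3)^{*}$ (zero pair; vector only; covector only; nonzero pairing with invariant $\uu\cdot\vv$; isotropic pairing) is correct and yields exactly the families ($\mathrm{\ov{F}}$D) through ($\mathrm{\ov{F}}$P).

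The genuine gap is exactly where the theorem is hard. For the block with $a$ of form ($\mathrm{K}_1$) you only announce a plan (``I would write down explicit generators of $\mathrm{Stab}_{\G}(\uu_1)$ \dots and carry out a step-by-step normalisation''): the stabilizer of $\uu_1$ is non-reductive, its structure and its orbits on $\OO$ are never determined, and consequently none of the six families {\rm($\rm K_1E$)}--{\rm($\rm K_1M_1^{\!\top}$)}, nor the constraints $\be_2\neq0$, $\be_5\neq0$, $\be_1\leq\be_8$, is actually derived; this is the bulk of the classification. The minimality claim suffers the same defect: as you note, in this block not all $\G$-orbits on $\OO^2$ are closed, so the polynomial invariants $\tr(b)$, $n(b)$, $q(a,b)$ cannot by themselves separate the listed representatives, and the supplementary ``structural data'' you invoke is neither specified nor computed. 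One must exhibit, for each pair of distinct representatives with the same left component, a concrete obstruction to their lying in one orbit of the stabilizer, and show that the surviving parameters are orbit invariants; until that and the ($\mathrm{K}_1$) orbit analysis are carried out explicitly, your text is a correct strategy rather than a proof.
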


\subsection{Pairs of octonions with zero norm}\label{section_norm_zero}

The following remark is a consequence of Proposition~\ref{prop_O_canon}.

\begin{remark}\label{remark_O_nor0_canon}
 The following set is a minimal set of representatives of $\G$-orbits on $\OO_{\#}$:
\begin{eq}\label{eq_set1}
\{0,\; \al_1 e_1,\; \uu_1\,|\, \al_1\in \FF^{\times}\}.
\end{eq}
\end{remark}
\medskip

\begin{prop}\label{prop_O2_norm0_canon} The following set is a minimal set of representatives of $\G$-orbits on pairs of octonions with zero norm $\OO_{\#}^2$: 
\begin{enumerate}
\item[{\rm $\mathrm{(0)}$}] $(0, 0)$, 

\item[{\rm $\mathrm{(I)}$}] $(0, \be_1 e_1)$ with  $\be_1\in \FF^{\times}$,

\item[{\rm $\mathrm{(II)}$}] $(0, \uu_1)$, 

\item[{\rm $\mathrm{(III)}$}] $(\al_1 e_1, 0)$, 

\item[{\rm $\mathrm{(IV)}$}] $(\uu_1, 0)$, 

\item[$\mathrm{(V)}$] $\left(\al_1 e_1, \matr{\be_1}{\zero}{\zero}{\be_8}\right)$ with $\be_1\be_8=0$ and $(\be_1,\be_8)\neq(0,0)$,

\item[$\mathrm{(VI)}$] $\left(\al_1 e_1, \matr{\be_1}{(1,0,0)}{\zero}{\be_8}\right)$  with $\be_1\be_8=0$,

\item[$\mathrm{(VII)}$] $\left(\al_1 e_1, \matr{\be_1}{\zero}{(1,0,0)}{\be_8}\right)$ with $\be_1\be_8=0$,

\item[$\mathrm{(VIII)}$] $\left(\al_1 e_1, \matr{\be_1}{(1,0,0)}{(\be_1 \be_8,0,0)}{\be_8}\right)$ with $\be_1,\be_8\neq0$,

\item[$\mathrm{(IX)}$] $\left(\al_1 e_1, \matr{\be_1}{(1,0,0)}{(0,1,0)}{\be_8}\right)$ with $\be_1\be_8=0$,

\item[$\mathrm{(X)}$] $\left(\uu_1, \matr{\be_1}{\zero}{\zero}{\be_8}\right)$ with $\be_1\be_8=0$ and  $(\be_1,\be_8)\neq(0,0)$,

\item[$\mathrm{(XI)}$] $(\uu_1, \be_2\uu_1)$ with $\be_2\neq 0$,

\item[$\mathrm{(XII)}$] $\left(\uu_1, \matr{\be_1}{\zero}{(\be_5,0,0)}{0}\right)$ with  $\be_5\neq 0$,

\item[$\mathrm{(XIII)}$] $\left(\uu_1, \matr{\be_1}{(0,1,0)}{\zero}{0}\right)$,

\item[$\mathrm{(XIV)}$] $(\uu_1, \vv_2)$,
\end{enumerate}
where $\al_1\in\FF^{\times}$ and $\be_1,\be_2,\be_5,\be_8\in\FF$.
\end{prop}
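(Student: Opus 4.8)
The plan is to classify $\G$-orbits on $\OO_{\#}^2$ by reducing to the already-known classification of $\G$-orbits on $\OO_{\#}$ (Remark~\ref{remark_O_nor0_canon}) applied to the first coordinate, and then, for each fixed first coordinate $a$ in the list $\{0,\,\al_1 e_1,\,\uu_1\}$, to classify the orbits of the stabilizer $\G_a = \{g\in\G\mid ga=a\}$ on the set $\{b\in\OO_{\#}\mid (a,b)\in\OO_{\#}^2\} = \OO_{\#}$. Concretely: if $(a,b)$ and $(a',b')$ lie in the same $\G$-orbit, first move $a,a'$ to their canonical forms; if these canonical forms differ, the pairs are in different orbits (hence the block structure of the list: $(0)$; $(\mathrm I),(\mathrm{II})$ with first coordinate $0$; $(\mathrm{III})$--$(\mathrm{IX})$ with first coordinate $\al_1 e_1$; $(\mathrm X)$--$(\mathrm{XIV})$ with first coordinate $\uu_1$), so the problem splits into three stabilizer-orbit problems.

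Next I would handle the three cases in order of increasing difficulty. For $a=0$ the stabilizer is all of $\G$, and orbits on the second coordinate are exactly the $\G$-orbits on $\OO_{\#}$ from Remark~\ref{remark_O_nor0_canon}, giving $(0),(\mathrm I),(\mathrm{II})$; this is immediate. For $a=\al_1 e_1$ with $\al_1\in\FF^\times$ (equivalently, $a$ a nonzero norm-zero idempotent-type element up to scalar), I would identify $\G_{\al_1 e_1} = \G_{e_1}$ explicitly — this is a well-understood parabolic-type subgroup of $\G_2$ (isomorphic to the group usually denoted, roughly, $\SL_3$ acting on the Peirce decomposition of $\OO$ relative to the idempotent $e_1$, extended by unipotent pieces). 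Using the Peirce decomposition $\OO = \FF e_1 \oplus \FF e_2 \oplus \langle\uu_1,\uu_2,\uu_3\rangle \oplus \langle\vv_1,\vv_2,\vv_3\rangle$, I would write a general $b=\matr{\be_1}{\uu}{\vv}{\be_8}$ with $n(b)=\be_1\be_8-\uu\cdot\vv=0$, and run the $\SL_3$-action (and the unipotent radical) on the pair $(\uu,\vv)\in\FF^3\oplus\FF^3$ while tracking how $(\be_1,\be_8)$ transforms under the unipotent part. The $\SL_3$-orbits on $(\uu,\vv)$ are governed by the ranks of $\uu$, $\vv$ and the scalar $\uu\cdot\vv$: this produces the cases $(\uu,\vv)=(0,0)$, $(\uu,\vv)$ with $\uu\neq0,\vv=0$, symmetrically $\uu=0,\vv\neq0$, and $\uu,\vv$ both nonzero (subdivided by whether $\uu\cdot\vv=0$, which it must be here unless $\be_1\be_8\neq0$). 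Then the unipotent radical of $\G_{e_1}$ is used to normalize $(\be_1,\be_8)$ subject to $n(b)=0$; this is precisely what forces the constraints "$\be_1\be_8=0$" in cases $(\mathrm{V})$--$(\mathrm{VII}),(\mathrm{IX})$ and the exotic relation "$(\be_5,0,0)=(\be_1\be_8,0,0)$" in case $(\mathrm{VIII})$, where $\be_1\be_8\neq0$ cannot be killed. The case $a=\uu_1$ is similar but the stabilizer $\G_{\uu_1}$ is a smaller unipotent-by-reductive group (roughly a maximal unipotent extended by a torus-and-$\SL_2$), and its orbits on $b\in\OO_{\#}$ yield $(\mathrm{X})$--$(\mathrm{XIV})$.

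A cleaner alternative, which I would in fact prefer, is to deduce everything directly from Theorem~\ref{theo_O2_canon} by intersecting each listed $\G$-orbit representative of $\OO^2$ with the closed condition $n(a)=n(b)=0$. For each of the fourteen families (EE), (E$\ov{\mathrm F}$), $\ldots$, $(\mathrm K_1 M_1^\top)$ one computes $n$ of both entries in terms of the parameters $\al_i,\be_i$ and imposes both $=0$; the surviving sub-families, after renaming, should match $(0)$--$(\mathrm{XIV})$. For instance: $n(\al_1 1_\OO)=\al_1^2$, so from (EE),(E$\ov{\mathrm F}$),(E$\mathrm K_1$) only $\al_1=0$ survives, collapsing $1_\OO$-first-coordinate rows; $n$ of a diagonal octonion $\matr{\be_1}{0}{0}{\be_8}$ is $\be_1\be_8$, giving the ubiquitous condition $\be_1\be_8=0$; for $K$-type, $L$-type, $N$-type, $P$-type second coordinates one computes $n = \be_1\be_8 - (\text{dot product})$, producing conditions like $\be_1\be_8=0$ or $\be_1\be_8=\be_5$; and the $\ov{\phantom F}$-constraints $\al_1<\al_8$ survive verbatim except where a norm-zero forces $\al_1=0$ (first entry $0$) so the ordering becomes vacuous. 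One must also check that no two distinct surviving sub-families become $\G$-conjugate after the specialization — this cannot happen, since they were already in distinct $\G$-orbits in $\OO^2$, so separateness is automatic; the only real work is verifying completeness, i.e.\ that every $\G$-orbit on $\OO_{\#}^2$ meets the list (which follows because every $\G$-orbit on $\OO^2$ meets Theorem~\ref{theo_O2_canon}'s list, and a norm-zero pair lies in some such orbit, whose representative then also has norm zero by $\G$-invariance of $n$).

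The main obstacle I anticipate is the bookkeeping in the "middle" case — the appearance of the two genuinely distinct constraints $\be_1\be_8=0$ versus the off-diagonal relation in case $(\mathrm{VIII})$ where $\be_1\be_8\neq 0$ — i.e.\ correctly determining, within each $\OO^2$-orbit family, exactly which specialization of parameters is compatible with $n(b)=0$ and whether further normalization within that family is still possible once the norm-zero locus is imposed. Concretely, for the $N$-type and $P$-type second coordinates one must recheck that the stabilizer moves used in the proof of Theorem~\ref{theo_O2_canon} to reach those forms are still available after restricting to the subvariety, and that scaling freedoms (which produced the ``$1$'' entries) are unaffected; this requires re-examining a handful of the reduction steps from~\cite{LZ_1} rather than citing the end result blindly. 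Everything else — the three-block decomposition, the norm computations, and the separateness — is routine.
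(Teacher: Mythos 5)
Your preferred alternative---specializing each of the fourteen families of Theorem~\ref{theo_O2_canon} to the locus $n(a)=n(b)=0$, with completeness and minimality automatic from $\G$-invariance of the norm and from the minimality of the original list---is exactly the paper's proof, which it states as a case-by-case consideration of Theorem~\ref{theo_O2_canon}. Your closing worry about re-verifying normalization steps for the $N$- and $P$-type families is unnecessary: since the list in Theorem~\ref{theo_O2_canon} is a \emph{minimal} set of representatives, every $\G$-orbit contained in $\OO_{\#}^2$ meets it in exactly one point, so no further reduction on the norm-zero subvariety is required.
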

\begin{proof} The claim follows from Theorem~\ref{theo_O2_canon} as the result of case by case consideration.
\end{proof}

\begin{defin}\label{def_canon}{}

\begin{enumerate}
\item[1.] The elements from set~(\ref{eq_set1}) are called {\it canonical non-invertible octonions}.

\item[2.] The pairs of elements from the formulation of Proposition~\ref{prop_O2_norm0_canon} are called {\it pairs of canonical non-invertible octonions}.
\end{enumerate}
\end{defin}

\section{Linear equation $ax=c$}\label{section_AXisC}

In this section we consider the linear equation
\begin{eq}\label{eq_AXisB}
ax = c,
\end{eq}%
where $a, c \in \OO$ are given constants and $x \in \OO$ is a variable. Clearly, if $a \notin \OO_{\#}$, then equation~(\ref{eq_AXisB}) has the unique solution $x = a^{-1}c$. The case $a \in \OO_{\#}$ is addressed in the following proposition.

\begin{prop}\label{prop_AXisB} 
Let $a \in \OO_{\#}$ be nonzero and $c \in \OO$. By acting with $\G$ on the equation $ax = c$, we may assume that $a$ is a canonical non-invertible octonion, as in Remark~\ref{remark_O_nor0_canon}. Denote by $X$ the solution set of the equation $ax = c$. Then $X$ is nonempty if and only if one of the following cases occurs for some $\gamma_i \in \FF$:
\begin{enumerate}
\item[1.] If $a = \alpha_1 e_1$ with $\alpha_1 \in \FF^{\times}$ and
$c = \begin{pmatrix}
\gamma_1 & (\gamma_2, \gamma_3, \gamma_4) \\
(0,0,0) & 0
\end{pmatrix}
$,
the set $X$ consists of all
\[
x = \begin{pmatrix}
\gamma_1 / \alpha_1 & (\gamma_2 / \alpha_1, \gamma_3 / \alpha_1, \gamma_4 / \alpha_1) \\
(x_5, x_6, x_7) & x_8
\end{pmatrix} \quad \text{for any } x_i \in \FF.
\]

\item[2.] If $a = \uu_1$ and 
$c = \begin{pmatrix}
\gamma_1 & (\gamma_2, 0, 0) \\
(0, \gamma_6, \gamma_7) & 0
\end{pmatrix},
$
the set $X$ consists of all
\[
x = \begin{pmatrix}
x_1 & (x_2, \gamma_7, -\gamma_6) \\
(\gamma_1, x_6, x_7) & \gamma_2
\end{pmatrix} \quad \text{for any } x_i \in \FF.
\]

\end{enumerate}

\noindent{}Moreover, the equation $ax = c$ has a solution if and only if $\overline{a} c = 0$.
\end{prop}
\begin{proof} In case $x$ is a solution for $ax=c$, equation~(\ref{eq4b}) implies that $0=n(a)x=\ov{a}(ax)=\ov{a}c$. 

Assume that $\ov{a}c=0$. We will show that in this case there is a solution for $ax=c$ and case 1 or 2 from the formulation of the lemma holds. Denote octonions $c=(\ga_1,\ldots,\ga_8)$ and $x=(x_1,\ldots,x_8)$, where $\ga_j,x_j\in\FF$.
\medskip

\noindent{\bf 1.} Assume that $a=\al_1 e_1$ with $\al_1\in\FF^{\times}$.  Then equality $\ov{a}c=0$ implies that $\ga_5=\cdots=\ga_8=0$. Hence, $x$ is a solution for $ax=c$ if and only if $x_i=\ga_i/\al_1$ for $i=1,\ldots,4$.

\medskip
\noindent{\bf 2.} Assume that $a=\uu_1$.  Then equality $\ov{a}c=0$ implies that $\ga_3=\ga_4=\ga_5=\ga_8=0$. Hence, $x$ is a solution for $ax=c$ if and only if $x_3 = \ga_7$, $x_4 = -\ga_6$,  $x_5 = \ga_1$, $x_8 = \ga_2$.
\end{proof}

\begin{cor}\label{cor_AXisB}
Let $a \in \OO$ be nonzero and $c \in \OO$, and let $X \subset \OO$ denote the set of all solutions of the equation $ax = c$. Then
\begin{enumerate}
\item[(a)] $|X| = 1$ if and only if $a \notin \OO_{\#}$;

\item[(b)] $X \in \Omega_4$ if and only if $a \in \OO_{\#}$ and $\overline{a} c = 0$;

\item[(c)] $X$ is empty if and only if $a \in \OO_{\#}$ and $\overline{a} c \neq 0$.
\end{enumerate}
\end{cor}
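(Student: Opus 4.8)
The plan is to deduce Corollary~\ref{cor_AXisB} directly from Proposition~\ref{prop_AXisB} by a routine case analysis, using the $\G$-equivariance of the equation to reduce to the canonical case and the fact that $\G$ acts on $\OO$ by linear automorphisms, hence maps flats to flats of the same dimension. First I would dispose of part (a): if $a\notin\OO_{\#}$ then $n(a)\neq 0$, so by Remark~\ref{remark_inv} and equality~(\ref{eq_inv}) the octonion $x=a^{-1}c$ is the unique solution, giving $|X|=1$; conversely if $a\in\OO_{\#}$ then (by (b)/(c) below) $X$ is either empty or a $4$-dimensional flat, so $|X|\neq 1$. Thus $|X|=1 \iff a\notin\OO_{\#}$.

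Next I would handle parts (b) and (c) together, assuming $a\in\OO_{\#}$, $a\neq 0$. Pick $g\in\G$ with $ga$ equal to one of the canonical non-invertible octonions $\al_1 e_1$ ($\al_1\in\FF^\times$) or $\uu_1$ of Remark~\ref{remark_O_nor0_canon} (possible by Proposition~\ref{prop_O_canon}/Remark~\ref{remark_O_nor0_canon}, noting $0$ is excluded since $a\neq 0$). The map $x\mapsto gx$ is a linear bijection of $\OO$ carrying the solution set of $ax=c$ onto the solution set of $(ga)y=gc$, and it sends $\Omega_r$ to $\Omega_r$; also $\ov{ga}\,(gc)=g(\ov{a}c)$, so the condition $\ov{a}c=0$ is $\G$-invariant. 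Hence it suffices to prove the statement for $a$ canonical. Then I invoke Proposition~\ref{prop_AXisB}: it shows that $X$ is nonempty iff $\ov{a}c=0$, and in that case the explicit description of $x$ exhibits $X$ as a coset of the linear span of four of the coordinate basis vectors (in case~1, the directions $e_{2},\uu'$-part giving $x_5,x_6,x_7,x_8$ free; in case~2, the directions giving $x_1,x_2,x_6,x_7$ free), so $\dim X=4$, i.e.\ $X\in\Omega_4$. This yields (b), and (c) is then just the logical complement: $X$ is empty precisely when neither $a\notin\OO_{\#}$ (part (a)) nor ($a\in\OO_{\#}$ and $\ov{a}c=0$) (part (b)) holds.

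The only mild subtlety — and the step I would be most careful about — is checking that in both cases of Proposition~\ref{prop_AXisB} the free parameters $x_i$ genuinely range over all of $\FF$ with exactly four of them unconstrained and the remaining four pinned down by \emph{affine} (not merely linear) equations, so that $X$ is an honest element of $\Omega_4$ rather than of some $\Omega_r$ with $r>4$ or a lower-dimensional flat; this is immediate from the displayed formulas for $x$ in the proposition. I would also remark that the three conditions in (a), (b), (c) are pairwise exclusive and jointly exhaustive, so each ``if and only if'' is legitimate. No genuine obstacle is expected: all the real work is already contained in Proposition~\ref{prop_AXisB} and in the linearity of the $\G$-action, so the corollary is essentially a bookkeeping consequence.
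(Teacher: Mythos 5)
Your argument is correct and takes essentially the same route as the paper: the unique-solution case via formula~(\ref{eq_inv}), and for $a\in\OO_{\#}$ a direct appeal to Proposition~\ref{prop_AXisB}, whose explicit solution sets show $X\in\Omega_4$ exactly when $\ov{a}c=0$ and $X=\emptyset$ otherwise; you merely spell out the $\G$-equivariance reduction and the dimension count that the paper leaves implicit. One trivial mislabeling: in case~1 the free coordinates $x_5,x_6,x_7$ lie in the $\vv$-part of the octonion (together with $x_8$ for $e_2$), not the $\uu$-part, but this does not affect the argument.
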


\begin{proof}
If $a \notin \OO_{\#}$, then $a$ is invertible and $x = a^{-1}c$ is the unique solution of $ax = c$ by formula~(\ref{eq_inv}). 

Assume $a \in \OO_{\#}$. Proposition~\ref{prop_AXisB} implies that either $X \in \Omega_4$ or $X$ is empty. Since $X$ is nonempty if and only if $\overline{a} c = 0$, the proof is complete.
\end{proof}

\section{Linear equation $(ax)b=c$}\label{section_AXBisC}

In this section we consider the linear equation
\begin{eq}\label{eq_AXBisC}
(ax)b = c,
\end{eq}%
where $a, b, c \in \OO$ are given constants and $x \in \OO$ is a variable. Clearly, if $a \notin \OO_{\#}$ or $b \notin \OO_{\#}$, then solving equation~(\ref{eq_AXBisC}) can be reduced to solving an equation of the form $a'x = b'$ for some $a', b' \in \OO$ (see Corollary~\ref{cor_AXBisC} below for details). The case $a, b \in \OO_{\#}$ is addressed in the following theorem.

\begin{theo}\label{theo_AXBisC}  Let $a, b \in \OO_{\#}$ be nonzero and $c \in \OO$. By acting with $\G$ on the equation $(ax)b = c$, we may assume that $(a,b)$ is a canonical pair of non-invertible octonions as in Proposition~\ref{prop_O2_norm0_canon}. Denote by $X$ the solution set of the equation $(ax)b = c$. Then $X$ is nonempty if and only if one of the following cases occurs for some $\alpha_1 \in \FF^{\times}$ and $\beta_i, \gamma_i \in \FF$:
\begin{enumerate}
\item[$\mathrm{(V)}$] 1. For $(a,b)=(\al_1 e_1, \be_8 e_2)$, $c=\ga_2 \uu_1 + \ga_3 \uu_2 + \ga_4 \uu_3$ with $\be_8\neq0$, the set $X$ consists of all
$$x=\matr{x_1}{(\frac{\ga_2}{\al_1\be_8},\frac{\ga_3}{\al_1 \be_8},\frac{\ga_4}{\al_1 \be_8})}{(x_5,x_6,x_7)}{x_8} \text{ for any }x_i\in\FF.$$

\noindent{} 2. For $(a,b)=(\al_1 e_1, \be_1 e_1)$, $c=\ga_1 e_1$ with $\be_1\neq0$, the set $X$ consists of all
$$x=\matr{\frac{\ga_1}{\al_1\be_1}}{(x_2,x_3,x_4)}{(x_5,x_6,x_7)}{x_8} \text{ for any }x_i\in\FF.$$

\item[$\mathrm{(VI)}$] 1. For $(a,b)=(\al_1 e_1, \be_8 e_2 + \uu_1)$ and 
$$c=\matr{0}{(\ga_2, -\be_8 \ga_7, \be_8 \ga_6)}{(0,\ga_6,\ga_7)}{0}$$
with $\be_8\neq0$, the set $X$ consists of all
$$x=\matr{\frac{\ga_2}{\al_1} - \be_8 x_2}{(x_2, -\frac{\ga_7}{\al_1}, \frac{\ga_6}{\al_1})}{(x_5, x_6, x_7)}{x_8} \text{ for any }x_i\in\FF.$$

\noindent{} 2. For $(a,b)=(\al_1 e_1, \be_1 e_1 + \uu_1)$ and 
$$c=\matr{\be_1 \ga_2}{(\ga_2, 0, 0)}{(0,\ga_6, \ga_7)}{0},$$
the set $X$ consists of all
$$x=\matr{\frac{\ga_2}{\al_1}}{(x_2, -\frac{\ga_7}{\al_1}, \frac{\ga_6}{\al_1})}{(x_5, x_6, x_7)}{x_8} \text{ for any }x_i\in\FF.$$

\item[$\mathrm{(VII)}$] 1. For $(a,b)=(\al_1 e_1, \be_8  e_2 + \vv_1)$ and  $$c=\matr{\ga_1}{(\be_8\ga_1,\ga_3,\ga_4)}{(0,0,0)}{0}$$
with $\be_8\neq0$, the set $X$ consists of all
$$x=\matr{x_1}{(\frac{\ga_1}{\al_1}, \frac{\ga_3}{\al_1\be_8}, \frac{\ga_4}{\al_1\be_8})}{(x_5,x_6,x_7)}{x_8} \text{ for any }x_i\in\FF.$$

\noindent{} 2. For $(a,b)=(\al_1e_1, \be_1e_1 + \vv_1)$, $c=\ga_1e_1$, the set $X$ consists of all
$$x=\matr{x_1}{(\frac{\ga_1}{\al_1}-\be_1 x_1, x_3, x_4)}{(x_5, x_6, x_7)}{x_8} \text{ for any }x_i\in\FF.$$

\item[$\mathrm{(VIII)}$] For $(a,b)=(\al_1 e_1, \be_1 e_1 + \be_8 e_2 +\uu_1 + \be_1\be_8\vv_1)$ and 
$$c=\matr{\be_1\ga_2}{(\ga_2, -\be_8\ga_7, \be_8\ga_6)}{(0, \ga_6, \ga_7)}{0}$$
with $\be_1,\be_8\neq0$, the set $X$ consists of all
$$x=\matr{\frac{\ga_2}{\al_1}-\be_8 x_2}{(x_2, -\frac{\ga_7}{\al_1}, \frac{\ga_6}{\al_1})}{(x_5,x_6,x_7)}{x_8} \text{ for any }x_i\in\FF.$$

\item[$\mathrm{(IX)}$] 1. For $(a,b)=(\al_1 e_1, \be_8e_2 + \uu_1 + \vv_2)$ and $$c=\matr{\ga_1}{(\ga_2, \be_8\ga_1, \be_8\ga_6)}{(0, \ga_6, -\ga_1)}{0}$$ 
with $\be_8\neq0$, the set $X$ consists of all
$$x=\matr{\frac{\ga_2}{\al_1}-\be_8x_2}{(x_2, \frac{\ga_1}{\al_1}, \frac{\ga_6}{\al_1})}{(x_5, x_6, x_7)}{x_8} \text{ for any }x_i\in\FF.$$

\noindent{} 2. For $(a,b)=(\al_1e_1, \be_1e_1 + \uu_1 + \vv_2)$ and 
$$c=\matr{\ga_1}{(\ga_2, 0, 0)}{(0, \ga_6, -\ga_1 + \be_1 \ga_2)}{0},$$ 
the set $X$ consists of all
$$x=\matr{\frac{\ga_2}{\al_1}}{(x_2,\frac{\ga_1 - \be_1\ga_2}{\al_1},\frac{\ga_6}{\al_1})}{(x_5, x_6, x_7)}{x_8} \text{ for any }x_i\in\FF.$$

\item[$\mathrm{(X)}$] 1. For $(a,b)=(\uu_1, \be_8 e_2)$, $c=\ga_2 \uu_1$ with $\be_8\neq0$, the set $X$ consists of all
$$x=\matr{x_1}{(x_2,x_3,x_4)}{(x_5,x_6,x_7)}{\frac{\ga_2}{\be_8}} \text{ for any }x_i\in\FF.$$

\noindent{} 2. For $(a,b)=(\uu_1, \be_1 e_1)$, $c=\ga_1 e_1 + \ga_6\vv_2 +\ga_7\vv_3$ with $\be_1\neq0$, the set $X$ consists of all
$$x=\matr{x_1}{(x_2, \frac{\ga_7}{\be_1}, -\frac{\ga_6}{\be_1})}{(\frac{\ga_1}{\be_1},x_6,x_7)}{x_8} \text{ for any }x_i\in\FF.$$

\item[$\mathrm{(XI)}$] For $(a,b)=(\uu_1,\be_2 \uu_1)$, $c=\ga_2\uu_1$ with $\be_2\neq0$, the set $X$ consists of all
$$x=\matr{x_1}{(x_2,x_3,x_4)}{(\frac{\ga_2}{\be_2},x_6,x_7)}{x_8} \text{ for any }x_i\in\FF.$$

\item[$\mathrm{(XII)}$] For $(a,b)=(\uu_1, \be_1e_1 + \be_5 \vv_1)$ and $$c=\matr{\ga_1}{(0,\ga_3,\ga_4)}{(0,\frac{\be_1\ga_4}{\be_5}, -\frac{\be_1\ga_3}{\be_5})}{0}$$
with $\be_5\neq0$,  the set $X$ consists of all
$$x=\matr{x_1}{(x_2, -\frac{\ga_3}{\be_5}, -\frac{\ga_4}{\be_5})}{(x_5, x_6, x_7)}{\frac{\ga_1-\be_1x_5}{\be_5}} \text{ for any }x_i\in\FF.$$

\item[$\mathrm{(XIII)}$] For $(a,b)=(\uu_1, \be_1 e_1 + \uu_2)$ and 
$$c=\matr{\be_1 \ga_3}{(0, \ga_3, 0)}{(0, \be_1 \ga_8, \ga_7)}{\ga_8},$$ 
the set $X$ consists of all
$$x=\matr{x_1}{(x_2, x_3, -\ga_8)}{(\ga_3, x_6, x_7)}{\ga_7 - \be_1 x_3} \text{ for any }x_i\in\FF.$$

\item[$\mathrm{(XIV)}$] For $(a,b)=(\uu_1,\vv_2)$, $c=\ga_2 \uu_1$, we have 
$$x=\matr{x_1}{(x_2, \ga_2, x_4)}{(x_5, x_6, x_7)}{x_8} \text{ for any }x_i\in\FF.$$
\end{enumerate}
\end{theo}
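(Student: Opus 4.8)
The plan is to reduce the equation $(ax)b=c$ over the (non-associative) octonion algebra to a linear system over $\FF$ by expanding everything in the fixed basis $e_1,e_2,\uu_i,\vv_i$, but to do so only after exploiting the $\G$-action. First I would note that by Proposition~\ref{prop_O2_norm0_canon} and Definition~\ref{def_canon}, any pair $(a,b)$ of non-zero octonions with $n(a)=n(b)=0$ can be brought, by a simultaneous automorphism $g\in\G$, to one of the fourteen canonical pairs $\mathrm{(0)}$--$\mathrm{(XIV)}$; since $\G$ acts by algebra automorphisms, $(ax)b=c$ is equivalent to $((ga)(gx))(gb)=gc$, so it suffices to solve the equation for each canonical pair. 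Cases $\mathrm{(0)}$--$\mathrm{(IV)}$ have $a=0$ or $b=0$ (so the equation is either trivial or of the already–treated form $ax=c'$ or $(xb)=c'$, the latter handled by transposition/conjugation), which is why the theorem's list starts at $\mathrm{(V)}$. For each of the remaining canonical pairs one further uses the residual freedom: the stabilizer in $\G$ of the canonical pair still acts on $c$, which lets one assume $c$ has the reduced shape displayed in each item (this is implicit in the phrasing "for some $\be_i,\ga_i\in\FF$").

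Next, for a \emph{fixed} canonical pair $(a,b)$, I would write the unknown as $x=(x_1,\dots,x_8)$ and compute $(ax)b$ explicitly using the matrix multiplication formula for $\OO$ (the $2\times2$ matrix product with the $\uu\cdot\vv$, $\uu\times\vv$ entries). Because $a$ and $b$ in each canonical case are extremely sparse (combinations of $e_1,e_2,\uu_1,\vv_1,\vv_2$ with at most one or two free scalars $\al_1,\be_1,\be_5,\be_8$), the product $(ax)b$ is a vector of $8$ explicit linear forms in $x_1,\dots,x_8$. Setting $(ax)b=c$ componentwise gives a linear system; I would solve it by inspection, reading off (i) the consistency conditions on the entries of $c$ — these are exactly the constraints on $\ga_i,\be_i$ recorded in the statement (e.g. in $\mathrm{(VI).1}$ the first entry of $c$ is $0$, the $(1,2)$-block is $(\ga_2,-\be_8\ga_7,\be_8\ga_6)$, etc.) — and (ii) the general solution, which pins down some of the $x_i$ in terms of the $\ga_i,\be_i$ and leaves the rest free, precisely as displayed. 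The maps $a\mapsto a\,\cdot$ and $\cdot\,b$ are rank-$4$ on $\OO$ when $n(a)=0\neq a$ (cf. Remark~\ref{remark_inv} and equation~(\ref{eq4b}): $\ov a(ax)=n(a)x=0$, so $\operatorname{Im}(a\,\cdot)\subseteq\ker(\ov a\,\cdot)$, and a dimension count gives equality), which explains why the solution sets, when nonempty, are $4$-, $5$-, $6$- or higher-dimensional flats depending on how much the composition $x\mapsto(ax)b$ collapses.

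The main obstacle is not conceptual but organizational: there are fourteen cases, several split into two sub-cases (the $e_1$ vs.\ $e_2$ alternatives coming from $\be_1\be_8=0$ in Proposition~\ref{prop_O2_norm0_canon}), and in each one must (a) correctly expand $(ax)b$ — where the non-associativity means $(ax)b\neq a(xb)$ in general, so the bracketing must be respected — and (b) identify the normal form of $c$ permitted by the stabilizer of $(a,b)$, then (c) verify consistency and exhibit the solution. I would carry this out case by case in the order $\mathrm{(V)},\mathrm{(VI)},\dots,\mathrm{(XIV)}$, and in each case simply present the computed $(ax)b$, the resulting constraints on $c$, and the solution set, checking at the end that substituting the displayed $x$ back into $(ax)b$ indeed returns the displayed $c$; the alternativity identities~(\ref{eq4})--(\ref{eq4b}) are the only structural facts needed beyond brute expansion, and they are what guarantee the consistency condition can always be phrased intrinsically (the analogue of "$\ov a c=0$" from Proposition~\ref{prop_AXisB}).
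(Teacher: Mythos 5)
Your overall strategy --- act by $\G$ to replace $(a,b)$ by a canonical pair from Proposition~\ref{prop_O2_norm0_canon}, then expand $(ax)b=c$ componentwise in the basis $e_1,e_2,\uu_i,\vv_i$ for each of the ten cases $\mathrm{(V)}$--$\mathrm{(XIV)}$ and solve the resulting linear system by inspection --- is exactly the paper's proof. However, one step of your plan is genuinely wrong: you propose to use the residual action of the stabilizer of the canonical pair $(a,b)$ on $c$ in order to ``assume $c$ has the reduced shape displayed in each item.'' That is not what those displayed shapes are. In the paper $c=(\ga_1,\ldots,\ga_8)$ is kept completely general after $(a,b)$ has been normalized, and the stated form of $c$ in each item is precisely the list of consistency conditions (certain $\ga_i=0$ or linear relations among the $\ga_i$) forced by the eight scalar equations; in other words it describes the image of the linear map $x\mapsto(ax)b$, a subspace with free parameters $\ga_i$, not a set of orbit representatives. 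Normalizing $c$ by the stabilizer would require a classification of stabilizer orbits on $\OO$ (neither available nor needed) and would destroy the ``has a solution if and only if \dots for some $\ga_i\in\FF$'' form of the statement, since you would no longer be characterizing solvability for an arbitrary $c$. Drop that step: the constraints on $c$ must be \emph{derived}, case by case, as part of solving the system --- which your point (i) in fact already does, so the two halves of your plan are in tension with each other.

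A second, smaller inaccuracy: you expect the consistency condition to be expressible intrinsically as an analogue of $\ov{a}c=0$ from Proposition~\ref{prop_AXisB}. For $(ax)b=c$ with $a,b\in\OO_{\#}$ this fails: identity~(\ref{eq4b}) only yields the necessary condition $c\ov{b}=0$, and Remark~\ref{rem53} shows it is not sufficient (e.g.\ $a=e_1$, $b=\uu_1$, $c=e_2$). So the case-by-case coordinate conditions cannot be compressed into a single intrinsic equation; this does not affect the correctness of the computational core of your argument, but the closing claim of your proposal should be removed.
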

\begin{proof} Denote octonions $c=(\ga_1,\ldots,\ga_8)$ and $x=(x_1,\ldots,x_8)$, where $\ga_j,x_j\in\FF$. Since $(a,b)$ is a canonical pair of non-invertible nonzero octonions from Proposition~\ref{prop_O2_norm0_canon}, then one of the following ten cases holds, where $\al_1\in\FF$ is always nonzero.

\begin{enumerate}
\item[$\mathrm{(V)}$] We have $(a,b)=(\al_1 e_1, \be_1 e_1 +\be_8 e_2)$ with $\be_1\be_8=0$ and $(\be_1,\be_8)\neq(0,0)$.  Then equation~(\ref{eq_AXBisC}) implies that $\ga_5=\cdots=\ga_8=0$. Since $\be_1\be_8=0$, we have that one of the next two cases holds. 
\begin{enumerate}
\item[1.] Assume $\be_1=0$ and $\be_8\neq0$. Hence, it follows from equation~(\ref{eq_AXBisC}) that $\ga_1 = 0$, $x_2 = \ga_2/\al_1 \be_8$,  $x_3 = \ga_3/\al_1 \be_8$,  $x_4 = \ga_4/\al_1 \be_8$.

\item[2.] Assume that $\be_1\neq0$ and $\be_8=0$.  Hence, it follows from equation~(\ref{eq_AXBisC}) that $\ga_2 = 0$, $\ga_3 = 0$, $\ga_4 = 0$, $x_1 = \ga_1/\al_1 
\be_1$.
\end{enumerate}

\medskip
\item[$\mathrm{(VI)}$] We have $(a,b)=(\al_1 e_1, \be_1 e_1 +\be_8 e_2 + \uu_1)$  with $\be_1\be_8=0$.  Since $\be_1\be_8=0$, we have that one of the next two cases holds. 
\begin{enumerate}
\item[1.] Assume $\be_1=0$ and $\be_8\neq0$. Then equation~(\ref{eq_AXBisC}) implies that $\ga_1 = 0$, $\ga_5 = 0$, $\ga_8 = 0$.  Hence, it follows from equation~(\ref{eq_AXBisC}) that $x_1 = (\ga_2/\al_1) - \be_8 x_2$, $x_3 = \ga_3/\al_1 \be_8$, $x_4 = \ga_4/\al_1 \be_8$. Finally,  equation~(\ref{eq_AXBisC}) implies that $\ga_4 = \be_8 \ga_6$ and  $\ga_3 = -\be_8 \ga_7$. 

\item[2.] Assume that $\be_8=0$.  Then equation~(\ref{eq_AXBisC}) implies that $\ga_3 = 0$, $\ga_4 = 0$, $\ga_5 = 0$, $\ga_8 = 0$.  Hence, it follows from equation~(\ref{eq_AXBisC}) that $x_1 = \ga_2/\al_1$, $x_3 = -\ga_7/\al_1$, $x_4 = \ga_6/\al_1$. Finally,  equation~(\ref{eq_AXBisC}) implies that $\ga_1 = \be_1 \ga_2$. 
\end{enumerate}

\medskip
\item[$\mathrm{(VII)}$] We have $(a,b)=(\al_1 e_1, \be_1 e_1 +\be_8 e_2 + \vv_1)$ with $\be_1\be_8=0$. Then equation~(\ref{eq_AXBisC}) implies that $\ga_5 = 0$, $\ga_6 = 0$, $\ga_7 = 0$, $\ga_8 = 0$.  Since $\be_1\be_8=0$, we have that one of the next two cases holds. 
\begin{enumerate}
\item[1.] Assume $\be_1=0$ and $\be_8\neq0$. Then equation~(\ref{eq_AXBisC}) implies that  $x_2 = \ga_2/\al_1 \be_8$, $x_3 = \ga_3/\al_1 \be_8$, $x_4 = \ga_4/\al_1 \be_8$.  Hence, it follows from equation~(\ref{eq_AXBisC}) that $\ga_2 = \be_8 \ga_1$. 

\item[2.] Assume that $\be_8=0$. Hence, it follows from equation~(\ref{eq_AXBisC}) that  $\ga_2 = 0$, $\ga_3 = 0$, $\ga_4 = 0$. Then equation~(\ref{eq_AXBisC}) implies that $x_2 = (
\ga_1/\al_1) - \be_1 x_1$. 
\end{enumerate}

\medskip
\item[$\mathrm{(VIII)}$] We have $(a,b)=\left(\al_1 e_1, \matr{\be_1}{(1,0,0)}{(\be_1 \be_8,0,0)}{\be_8}\right)$ with $\be_1,\be_8\neq0$. Then equation~(\ref{eq_AXBisC}) implies that $\ga_5 = 0$ and $\ga_8 = 0$. Hence, it follows from equation~(\ref{eq_AXBisC}) that $x_1 = (\ga_2/\al_1) - \be_8 x_2$, $x_3 = -\ga_7/\al_1$, $x_4 = \ga_6/\al_1$. Finally, it follows from equation~(\ref{eq_AXBisC}) that $\ga_1 = \be_1 \ga_2$, $\ga_3 = -\be_8 \ga_7$, $\ga_4 = \be_8 \ga_6$. 

\medskip
\item[$\mathrm{(IX)}$] We have $(a,b)=\left(\al_1 e_1, \matr{\be_1}{(1,0,0)}{(0,1,0)}{\be_8}\right)$ with $\be_1\be_8=0$. Then equation~(\ref{eq_AXBisC}) implies that $\ga_5 = 0$ and $\ga_8 = 0$.  Since $\be_1\be_8=0$, we have that one of the next two cases holds. 
\begin{enumerate}
\item[1.] Assume $\be_1=0$ and $\be_8\neq0$. Then equation~(\ref{eq_AXBisC}) implies that  $x_1 = (\ga_2/\al_1) - \be_8 x_2$, $x_3 = \ga_1/\al_1$, $x_4 = \ga_6/\al_1$.  Hence, it follows from equation~(\ref{eq_AXBisC}) that $\ga_3 = \be_8 \ga_1$, $\ga_4 = \be_8 \ga_6$, $\ga_7 = -\ga_1$. 

\item[2.] Assume that $\be_8=0$. Hence, it follows from equation~(\ref{eq_AXBisC}) that   $\ga_3 = 0$ and $\ga_4 = 0$. Then equation~(\ref{eq_AXBisC}) implies that $x_1 = \ga_2/\al_1$, $x_3 = -\ga_7/\al_1$ and  $x_4 = \ga_6/\al_1$. Finally, equation~(\ref{eq_AXBisC}) implies that $\ga_7 = -\ga_1 + \be_1 \ga_2$. 
\end{enumerate}

\medskip
\item[$\mathrm{(X)}$] We have $(a,b)=(\uu_1, \be_1 e_1 +\be_8 e_2)$ with $\be_1\be_8=0$ and  $(\be_1,\be_8)\neq(0,0)$.  Then equation~(\ref{eq_AXBisC}) implies that $\ga_3 = 0$, $\ga_4 = 0$, $\ga_5 = 0$, $\ga_8 = 0$.  Since $\be_1\be_8=0$, we have that one of the next two cases holds. 
\begin{enumerate}
\item[1.] Assume $\be_1=0$ and $\be_8\neq0$. Hence, it follows from equation~(\ref{eq_AXBisC}) that  $\ga_1 = 0$, $\ga_6 = 0$, $\ga_7 = 0$, $x_8 = \ga_2/\be_8$. 

\item[2.] Assume $\be_1\neq0$ and $\be_8=0$. Hence, it follows from equation~(\ref{eq_AXBisC}) that   $\ga_2 = 0$, $x_3 = \ga_7/\be_1$, $x_4 = -\ga_6/\be_1$, $x_5 = \ga_1/\be_1$. 
\end{enumerate}

\medskip
\item[$\mathrm{(XI)}$] We have $(a,b)=(\uu_1, \be_2\uu_1)$ with $\be_2\neq 0$.
Then equation~(\ref{eq_AXBisC}) implies that $\ga_1 = 0$, $\ga_3 = \cdots = \ga_8 = 0$, $x_5 = \ga_2/\be_2$. 

\medskip
\item[$\mathrm{(XII)}$] We have $(a,b)=(\uu_1, \be_1 e_1 + \be_5\vv_1)$ with  $\be_5\neq 0$.  Then equation~(\ref{eq_AXBisC}) implies that $\ga_2 = 0$, $\ga_5 = 0$, $\ga_8 = 0$. Hence, it follows from equation~(\ref{eq_AXBisC}) that $x_3 = -\ga_3/\be_5$, $x_4 = -\ga_4/\be_5$, $x_8 = (\ga_1 - \be_1 x_5)/\be_5$. Finally, it follows from equation~(\ref{eq_AXBisC}) that $\ga_6 = \be_1 \ga_4/\be_5$ and $\ga_7 = -\be_1 \ga_3/\be_5$. 

\medskip
\item[$\mathrm{(XIII)}$] We have $(a,b)=(\uu_1, \be_1e_1+\uu_2)$.  Then equation~(\ref{eq_AXBisC}) implies that $\ga_2 = 0$, $\ga_4 = 0$, $\ga_5 = 0$, $x_4 = -\ga_8$. Hence, it follows from equation~(\ref{eq_AXBisC}) that $x_5 = \ga_3$, $x_8 = \ga_7 - \be_1 x_3$. Finally, it follows from equation~(\ref{eq_AXBisC}) that  $\ga_1 = \be_1 \ga_3$ and $\ga_6 = \be_1 \ga_8$. 

\medskip
\item[$\mathrm{(XIV)}$] We have $(a,b)=(\uu_1, \vv_2)$. Then equation~(\ref{eq_AXBisC}) implies that $\ga_1 = 0$, $x_3 = \ga_2$, $\ga_3 = \cdots= \ga_8 = 0$.
\end{enumerate}
\end{proof}

\begin{cor}\label{cor_AXBisC}
Given nonzero $a, b \in \OO$ and $c \in \OO$, let $X \subset \OO$ be the set of all solutions of the equation $(ax)b = c$. Then $X$ satisfies one of the following conditions:
\begin{enumerate}
\item[(a)] $X$ is empty;

\item[(b)] $|X| = 1$;

\item[(c)] $X \in \Omega_r$ for some $r \in \{4,5,7\}$.
\end{enumerate}
Moreover,
\begin{enumerate}
\item[$\bullet$] each of the above cases occurs for some nonzero $a, b \in \OO$ and $c \in \OO$;

\item[$\bullet$] $|X| = 1$ if and only if $a, b \notin \OO_{\#}$;

\item[$\bullet$] $X \in \Omega_4$ if and only if either $a \notin \OO_{\#}$, $b \in \OO_{\#}$, $b\overline{c} = 0$, or $a \in \OO_{\#}$, $b \notin \OO_{\#}$, $\overline{a}(cb^{-1}) = 0$;

\item[$\bullet$] $X \in \Omega_r$ for $r \in \{5,7\}$ if and only if $a, b \in \OO_{\#}$ and $X$ is nonempty.
\end{enumerate}
\end{cor}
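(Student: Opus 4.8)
The plan is to deduce Corollary~\ref{cor_AXBisC} from Theorem~\ref{theo_AXBisC} together with the already established case analysis for $ax=c$. First I would dispose of the cases where at least one of $a,b$ is invertible. If $a,b\notin\OO_{\#}$, then $a,b$ are both invertible by Remark~\ref{remark_inv}, and $(ax)b=c$ is equivalent to $ax=cb^{-1}$ (using~(\ref{eq_inv})), which has the unique solution $x=a^{-1}(cb^{-1})$; this yields $|X|=1$. If $a\notin\OO_{\#}$ but $b\in\OO_{\#}$, then again $ax=cb$ wait --- more carefully, $(ax)b=c$ forces $c\in\OO b$, and by~(\ref{eq4b}) we get $(ax)(b\ov c)$-type identities; the clean route is to set $y=ax$, so the equation becomes $yb=c$, which is the transpose-type of $ax=c$ and by (the transpose of) Corollary~\ref{cor_AXisB} has solution set either empty or a flat of dimension $4$, namely nonempty iff $b\ov c=0$ (the transposed nonemptiness condition $\ov{(b^\top)}\,c^\top=0$ rewrites as $b\ov c=0$); since $a$ is invertible, $x\mapsto ax$ is a linear bijection, so $X$ is the preimage, again empty or in $\Omega_4$. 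Symmetrically, if $a\in\OO_{\#}$ and $b\notin\OO_{\#}$, rewrite as $ax=cb^{-1}$ and apply Corollary~\ref{cor_AXisB}: $X$ is empty or in $\Omega_4$, nonempty iff $\ov a(cb^{-1})=0$.

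Next I would treat the main case $a,b\in\OO_{\#}$. Here I would invoke Theorem~\ref{theo_AXBisC}: after acting by a suitable $g\in\G$ we may assume $(a,b)$ is one of the canonical pairs (V)--(XIV) from Proposition~\ref{prop_O2_norm0_canon}. Since $\G$ acts by algebra automorphisms and preserves affine dimension, it suffices to read off, for each of the listed solution sets in Theorem~\ref{theo_AXBisC}, how many free parameters $x_i$ appear: inspecting the fourteen sub-cases one sees the number of free coordinates is always $5$ or $7$ (e.g. case (XIV) and the $\be_8=0$ sub-cases of (VII) and (X) give $7$; cases (V.2), (VI), (VIII), (IX), (XII), (XIII) give $5$; etc.), and in every case where the compatibility conditions on the $\ga_i$ fail the set is empty. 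This establishes that for $a,b\in\OO_{\#}$ the set $X$ is either empty or lies in $\Omega_5\cup\Omega_7$, proving the last bullet and completing the trichotomy (a)--(c) with $r\in\{4,5,7\}$.

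For the ``moreover'' bullets: the iff characterizations of $|X|=1$ and of $X\in\Omega_4$ follow by combining the three cases above --- $|X|=1$ happens exactly in the both-invertible case, and $X\in\Omega_4$ exactly in the two mixed cases (with the stated nonemptiness conditions), since in the $a,b\in\OO_{\#}$ case a nonempty $X$ always has dimension $5$ or $7$, never $1$ or $4$. That each of (a),(b),(c) is realized is witnessed by explicit examples: $|X|=1$ by any invertible $a,b$; $X\in\Omega_4$ by e.g. $a=\uu_1$, $b=1_{\OO}$, suitable $c$; $X\in\Omega_5$ and $X\in\Omega_7$ by the appropriate canonical pairs with compatible $c$ (e.g. (XIV) with $c=\ga_2\uu_1$ for $\Omega_7$); and $X=\emptyset$ by taking $c$ violating a compatibility relation.

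The main obstacle I anticipate is purely bookkeeping: correctly counting the free parameters in each of the fourteen sub-cases of Theorem~\ref{theo_AXBisC} and verifying that the value is always $5$ or $7$ (and in particular never $6$ or $8$), and double-checking the transpose bookkeeping that turns $(ax)b=c$ with $a$ invertible into the ``$\cdot\,b=c$'' form so that the $b\ov c=0$ condition comes out correctly rather than a variant. There is no conceptual difficulty beyond what Theorem~\ref{theo_AXBisC} and Corollary~\ref{cor_AXisB} already supply.
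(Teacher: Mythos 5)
Your proposal is correct and follows essentially the same route as the paper: the mixed cases are reduced to Corollary~\ref{cor_AXisB} via an anti-automorphism (you use the transpose where the paper uses the involution $a\mapsto\ov{a}$, a cosmetic difference) composed with the linear bijection coming from the invertible factor, and the case $a,b\in\OO_{\#}$ is read off from Theorem~\ref{theo_AXBisC} by counting free parameters, exactly as the paper does implicitly. Only your illustrative counts have small slips --- case (V.2) has $7$ free parameters, not $5$, and in case (X) it is the $\be_8\neq 0$ sub-case, not the $\be_8=0$ one, that gives $7$ --- but every sub-case indeed yields $5$ or $7$, so the conclusion and the ``moreover'' bullets stand as you argue.
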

\begin{proof} Assume that $a,b\not\in\OO_{\#}$. Then by~(\ref{eq_inv}) we have that $x$ is a solution of~(\ref{eq_AXBisC}) if and only if $x=a^{-1}(cb^{-1})$.

Assume that $a\not\in\OO_{\#}$ and $b\in\OO_{\#}$. Then $x$ is a solution of~(\ref{eq_AXBisC}) if and only if $ax$ is a solution of the equation $yb=c$, or equivalently, $\ov{ax}$ is a solution of $\ov{b}z=\ov{c}$, where $z$ is a variable. By Corollary~\ref{cor_AXisB}, the set $Z\subset\OO$ of all solutions of $\ov{b}z=\ov{c}$ is either empty or $Z\in \Omega_4$. In the first case, $X$ is empty. In the second case, we consider a linear invertible map $f:\OO\to\OO$,  $z\to a^{-1}\ov{z}$; therefore, $X=f(Z) \in \Omega_4$. Moreover, by Corollary~\ref{cor_AXisB} the second case holds if and only if $b\ov{c}=0$. Thus we have proven that
\begin{eq}
\text{if }b\ov{c}=0, \text{ then } X\in\Omega_4; \text{ otherwise, }X \text{ is empty}. 
\end{eq}

Assume that $a\in\OO_{\#}$ and $b\not\in\OO_{\#}$.  Then $x$ is a solution of~(\ref{eq_AXBisC}) if and only if $ax=cb^{-1}$. Corollary~\ref{cor_AXisB} implies that
\begin{eq}
\text{if }\ov{a}(cb^{-1})=0, \text{ then } X\in\Omega_4; \text{ otherwise, }X \text{ is empty}. 
\end{eq}

If $a,b\in\OO_{\#}$, then Theorem~\ref{theo_AXBisC} implies that $X$ is empty or $X \in \Omega_r$ for $r\in\{5,7\}$. The required statement is proven.
\end{proof}

\begin{remark}\label{rem53}
If the equation $(ax)b = c$ has a solution for some $a,b \in \OO_{\#}$, $c \in \OO$, then necessarily $c\overline{b} = 0$. However, the converse does not hold in general.
\end{remark}

\begin{proof}
If $(ax)b = c$, then multiplying both sides on the right by $\overline{b}$ gives
\[
c\overline{b} = ((ax)b)\overline{b} = n(b)\, a x = 0,
\]
using equalities~(\ref{eq4b}).  
On the other hand, for $a = e_1$, $b = \uu_1$, and $c = e_2$, we have $c\overline{b} = 0$, but the equation $(ax)b = c$ has no solution.
\end{proof}

\section{Linear equation $a(bx)=c$}\label{section_ABXisC}

In this section we consider the linear equation
\begin{eq}\label{eq_ABXisC}
a(bx)=c,
\end{eq}%
where $a, b, c \in \OO$ are given constants and $x \in \OO$ is a variable. Clearly, if $a \notin \OO_{\#}$ or $b \notin \OO_{\#}$, then solving equation~\eqref{eq_ABXisC} can be reduced to solving an equation of the form $a'x = b'$ for some $a', b' \in \OO$ (see Corollary~\ref{cor_ABXisC} below for details). The case $a, b \in \OO_{\#}$ is addressed in the following theorem.

\begin{theo}\label{theo_ABXisC}  Given a nonzero $a,b\in\OO_{\#}$ and $c\in\OO$, acting by $\G$ on the equation $a(bx)=c$  we can assume that $(a,b)$ is a canonical pair of non-invertible octonions from Proposition~\ref{prop_O2_norm0_canon}. Denote by $X$ the solution set of the equation $a(bx)=c$. Then $X$ is nonempty if and only if one of the following cases holds for some $\al_1\in\FF^{\times}$, $\be_i,\ga_i\in\FF$:
\begin{enumerate}
\item[$\mathrm{(V)}$] 1. For $(a,b)=(\al_1 e_1, \be_8 e_2)$, $c=0$ with $\be_8\neq0$, the set $X$ consists of all
$$x=\matr{x_1}{(x_2,x_3,x_4)}{(x_5,x_6,x_7)}{x_8} \text{ for any }x_i\in\FF.$$

\noindent{} 2. For $(a,b)=(\al_1 e_1, \be_1 e_1)$ and 
$$c=\matr{\ga_1}{(\ga_2,\ga_3,\ga_4)}{(0,0,0)}{0}$$ 
with $\be_1\neq0$, the set $X$ consists of all
$$x=\matr{\frac{\ga_1}{\al_1\be_1}}{(\frac{\ga_2}{\al_1\be_1}, \frac{\ga_3}{\al_1\be_1}, \frac{\ga_4}{\al_1\be_1})}{(x_5,x_6,x_7)}{x_8} \text{ for any }x_i\in\FF.$$

\item[$\mathrm{(VI)}$] 1. For $(a,b)=(\al_1 e_1, \be_8 e_2 + \uu_1)$, $c=\ga_1 e_1 + \ga_2 \uu_1$, the set $X$ consists of all 
$$x=\matr{x_1}{(x_2,x_3,x_4)}{(\frac{\ga_1}{\al_1},x_6,x_7)}{\frac{\ga_2}{\al_1}} \text{ for any }x_i\in\FF.$$

\noindent{} 2. For $(a,b)=(\al_1 e_1, \be_1 e_1 + \uu_1)$ and 
$$c=\matr{\ga_1}{(\ga_2, \ga_3, \ga_4)}{(0, 0, 0)}{0}$$
with $\be_1\neq0$, the set $X$ consists of all 
$$x=\matr{x_1}{(x_2,\frac{\ga_3}{\al_1\be_1} ,\frac{\ga_4}{\al_1\be_1} )}{(\frac{\ga_1}{\al_1} - \be_1 x_1, x_6,x_7)}{\frac{\ga_2}{\al_1} - \be_1 x_2} \text{ for any }x_i\in\FF.$$

\item[$\mathrm{(VII)}$] 1. For $(a,b)=(\al_1 e_1, \be_8  e_2 + \vv_1)$, $c=\ga_3\uu_2 + \ga_4\uu_3$
the set $X$ consists of all
$$x=\matr{x_1}{(x_2,x_3,x_4)}{(x_5, -\frac{\ga_4}{\al_1}, \frac{\ga_3}{\al_1})}{x_8} \text{ for any }x_i\in\FF.$$

\noindent{} 2. For $(a,b)=(\al_1e_1, \be_1e_1 + \vv_1)$ and
$$c=\matr{\ga_1}{(\ga_2, \ga_3, \ga_4)}{(0, 0, 0)}{0}$$
with $\be_1\neq0$, the set $X$ consists of all
$$x=\matr{\frac{\ga_1}{\al_1\be_1}}{(\frac{\ga_2}{\al_1\be_1},x_3,x_4)}{(x_5, -\frac{\ga_4}{\al_1} + \be_1x_4, \frac{\ga_3}{\al_1} - \be_1x_3)}{x_8} \text{ for any }x_i\in\FF.$$

\item[$\mathrm{(VIII)}$] For $(a,b)=(\al_1 e_1, \be_1 e_1 + \be_8 e_2 +\uu_1 + \be_1\be_8\vv_1)$ and 
$$c=\matr{\ga_1}{(\ga_2, \ga_3, \ga_4)}{(0, 0, 0)}{0}$$ 
with $\be_1,\be_8\neq0$, the set $X$ consists of all 
$$x=\matr{x_1}{(x_2,\frac{\ga_3}{\al_1\be_1} - \be_8 x_7, \frac{\ga_4}{\al_1\be_1} + \be_8 x_6)}{(\frac{\ga_1}{\al_1} - \be_1 x_1, x_6, x_7)}{\frac{\ga_2}{\al_1} - \be_1 x_2} \text{ for any }x_i\in\FF.$$

\item[$\mathrm{(IX)}$] 1. For $(a,b)=(\al_1 e_1, \be_8e_2 + \uu_1 + \vv_2)$ and $$c=\matr{\ga_1}{(\ga_2, 0, \ga_1)}{(0, 0, 0)}{0},$$ 
the set $X$ consists of all  
$$x=\matr{x_1}{(x_2,x_3,x_4)}{(\frac{\ga_1}{\al_1},x_6,x_7)}{\frac{\ga_2}{\al_1} + x_7} \text{ for any }x_i\in\FF.$$

\noindent{} 2. For $(a,b)=(\al_1e_1, \be_1e_1 + \uu_1 + \vv_2)$ and 
$$c=\matr{\ga_1}{(\ga_2, \ga_3, \ga_4)}{(0, 0, 0)}{0}$$
with $\be_1\neq0$, the set $X$ consists of all 
$$x=\matr{x_1}{(x_2, \frac{\ga_3}{\al_1\be_1}, \frac{\ga_4 - \ga_1}{\al_1\be_1} + x_1)}{(\frac{\ga_1}{\al_1} - \be_1x_1, x_6, x_7)}{\frac{\ga_1}{\al_1} - \be_1x_2 + x_7} \text{ for any }x_i\in\FF.$$

\item[$\mathrm{(X)}$] 1. For $(a,b)=(\uu_1, \be_8 e_2)$, $c=\ga_1 e_1 + \ga_2 \uu_1$ with $\be_8\neq0$, the set $X$ consists of all  
$$x=\matr{x_1}{(x_2,x_3,x_4)}{(\frac{\ga_1}{\be_8},x_6,x_7)}{\frac{\ga_2}{\be_8}} \text{ for any }x_i\in\FF.$$

\noindent{} 2. For $(a,b)=(\uu_1, \be_1 e_1)$, $c=\ga_6\vv_2 +\ga_7\vv_3$ with $\be_1\neq0$, the set $X$ consists of all  
$$x=\matr{x_1}{(x_2,\frac{\ga_7}{\be_1},-\frac{\ga_6}{\be_1})}{(x_5,x_6,x_7)}{x_8} \text{ for any }x_i\in\FF.$$

\item[$\mathrm{(XI)}$] For $(a,b)=(\uu_1,\be_2 \uu_1)$, $c=0$ with $\be_2\neq0$, the set $X$ consists of all  
$$x=\matr{x_1}{(x_2,x_3,x_4)}{(x_5,x_6,x_7)}{x_8} \text{ for any }x_i\in\FF.$$

\item[$\mathrm{(XII)}$] For $(a,b)=(\uu_1, \be_1e_1 + \be_5 \vv_1)$ and $$c=\matr{\ga_1}{(\ga_2, 0, 0)}{(0, \ga_6, \ga_7)}{0}$$
with $\be_5\neq0$, the set $X$ consists of all 
$$x=\matr{\frac{\ga_1}{\be_5}}{(\frac{\ga_2}{\be_5}, x_3,x_4)}{(x_5, \frac{\ga_6 + \be_1x_4}{\be_5}  \frac{\ga_7 - \be_1x_3}{\be_5})}{x_8} \text{ for any }x_i\in\FF.$$

\item[$\mathrm{(XIII)}$] For $(a,b)=(\uu_1, \be_1 e_1 + \uu_2)$ and 
$$c=\matr{\ga_1}{(0, 0, 0)}{(0, -\be_1\ga_1, \ga_7)}{0},$$ 
the set $X$ consists of all 
$$x=\matr{x_1}{(x_2,x_3,\ga_1)}{(x_5,x_6,x_7)}{\ga_7 - \be_1 x_3} \text{ for any }x_i\in\FF.$$

\item[$\mathrm{(XIV)}$] For $(a,b)=(\uu_1,\vv_2)$, $c=\ga_2 \uu_1 + \ga_6\vv_2$, we have  
$$x=\matr{x_1}{(x_2, \ga_2, x_4)}{(-\ga_6, x_6, x_7)}{x_8} \text{ for any }x_i\in\FF.$$
\end{enumerate}
\end{theo}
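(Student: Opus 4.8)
The plan is to argue exactly as in the proof of Theorem~\ref{theo_AXBisC}, with the equation $(ax)b=c$ replaced by $a(bx)=c$. First I would justify the reduction to canonical form: for $g\in\G=\Aut(\OO)$ one has $g\big(a(bx)\big)=(ga)\big((gb)(gx)\big)$, so $x$ solves $a(bx)=c$ if and only if $gx$ solves $(ga)\big((gb)y\big)=gc$; since $g$ is a linear automorphism of $\OO\cong\FF^8$, it maps the solution set bijectively onto an affine subspace of the same dimension. Hence, using Proposition~\ref{prop_O2_norm0_canon} together with the hypothesis that $a,b$ are both non-zero (which rules out cases $\mathrm{(0)}$--$\mathrm{(IV)}$), we may assume that $(a,b)$ is one of the ten canonical pairs $\mathrm{(V)}$--$\mathrm{(XIV)}$.

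For each such pair I would then perform the direct computation. Writing $x=(x_1,\ldots,x_8)$ and $c=(\ga_1,\ldots,\ga_8)$ in the coordinate notation of Section~\ref{section_def}, one first computes the intermediate octonion $bx$ from the multiplication formula of Section~\ref{section_O} --- here $b$ is a short linear combination of $e_1,e_2,\uu_1,\uu_2,\vv_1,\vv_2$, so this is an explicit, if slightly tedious, application of the dot and cross product formulas --- and then left-multiplies by $a$. Since in every canonical pair $a$ is either $\al_1 e_1$ or $\uu_1$, left multiplication by $a$ is very simple: $e_1 y$ retains only the entries $\eta_1,\eta_2,\eta_3,\eta_4$ of $y$ (its ``upper'' part), while $\uu_1 y$ is the octonion whose entries are assembled from $\eta_5$, $\eta_8$, $-\eta_4$, $\eta_3$. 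Equating $a(bx)$ with $c$ coordinate by coordinate yields a linear system in $x_1,\ldots,x_8$; reading it off gives simultaneously the compatibility conditions on the $\ga_i$ (and on the $\be_i$), which are recorded in the statement as the admissible form of $c$, together with the values of the determined coordinates $x_i$ and the list of the free ones.

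When a canonical pair carries a parameter constraint such as $\be_1\be_8=0$, I would split into the subcases $\be_1=0$ (with $\be_8\neq0$) and $\be_8=0$, which is exactly why cases $\mathrm{(V)}$, $\mathrm{(VI)}$, $\mathrm{(VII)}$, $\mathrm{(IX)}$, $\mathrm{(X)}$ appear with two sub-items. A cheap consistency check throughout is provided by~(\ref{eq4b}): applying it with $b$ replaced by $bx$ gives $\ov{a}c=\ov{a}\big(a(bx)\big)=n(a)(bx)=0$ because $n(a)=0$, and one verifies that the shape of $c$ found in each case indeed satisfies $\ov{a}c=0$.

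The main obstacle is organizational rather than conceptual: the argument must be carried out separately in all ten cases, five of them splitting further, and each case requires a correct evaluation of a cross product inside the octonion multiplication and careful bookkeeping of which coordinates of $x$ are forced and which remain free. The non-associativity of $\OO$ causes no trouble here, since $a(bx)$ is evaluated exactly as parenthesized and no reassociation is performed; but it does mean that one cannot shortcut the computation by ``cancelling'' $a$ or $b$, so every case must genuinely be unwound by hand.
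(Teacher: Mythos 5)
Your proposal follows essentially the same route as the paper: reduce to the canonical pairs of Proposition~\ref{prop_O2_norm0_canon} via the $\G$-action (which preserves the equation and maps solution sets linearly), then solve the resulting linear system coordinate-by-coordinate in each of the ten cases, splitting according to $\be_1\be_8=0$ and reading off both the compatibility conditions on $c$ and the free coordinates of $x$. This is exactly the paper's proof, so nothing further is needed beyond carrying out the computations.
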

\begin{proof} Denote octonions $c=(\ga_1,\ldots,\ga_8)$ and $x=(x_1,\ldots,x_8)$, where $\ga_j,x_j\in\FF$. Since $(a,b)$ is a canonical pair of non-invertible nonzero octonions from Proposition~\ref{prop_O2_norm0_canon}, then one of the following ten cases holds, where $\al_1\in\FF$ is always nonzero.

\begin{enumerate}
\item[$\mathrm{(V)}$] $(a,b)=(\al_1 e_1, \be_1 e_1 +\be_8 e_2)$ with $\be_1\be_8=0$ and $(\be_1,\be_8)\neq(0,0)$. Then equation~(\ref{eq_ABXisC}) implies that  $\ga_5 = \cdots = \ga_8 = 0$. Since $\be_1\be_8=0$, we have that one of the next two cases holds. 
\begin{enumerate}
\item[1.] Assume $\be_1=0$ and $\be_8\neq0$. Hence, it follows from equation~(\ref{eq_ABXisC}) that $\ga_1 = \cdots =\ga_4 = 0$.

\item[2.] Assume that $\be_1\neq0$ and $\be_8=0$.  Hence, it follows from equation~(\ref{eq_ABXisC}) that $x_1 = \ga_1/\al_1 \be_1$, $x_2 = \ga_2/\al_1 \be_1$, $x_3 = \ga_3/\al_1 \be_1$, $x_4 = \ga_4/\al_1 \be_1$.
\end{enumerate}

\medskip
\item[$\mathrm{(VI)}$] $(a,b)=(\al_1 e_1, \be_1 e_1 +\be_8 e_2 + \uu_1)$  with $\be_1\be_8=0$.  Then equation~(\ref{eq_ABXisC}) implies that $\ga_5 = \cdots = \ga_8 = 0$. Since $\be_1\be_8=0$, we have that one of the next two cases holds. 
\begin{enumerate}
\item[1.] Assume $\be_1=0$. Hence, it follows from equation~(\ref{eq_ABXisC}) that $x_5 = \ga_1/\al_1$, $x_8 = \ga_2/\al_1$, $\ga_3 = 0$, $\ga_4 = 0$.

\item[2.] Assume that $\be_1\neq0$ and $\be_8=0$.  Hence, it follows from equation~(\ref{eq_ABXisC}) that $x_5 = (\ga_1/\al_1) - \be_1 x_1$, $x_8 = (\ga_2/\al_1) - \be_1 x_2$, $x_3 = \ga_3/\al_1 \be_1$,  $x_4 = \ga_4/\al_1 \be_1$.
\end{enumerate}

\medskip
\item[$\mathrm{(VII)}$] $(a,b)=(\al_1 e_1, \be_1 e_1 +\be_8 e_2 + \vv_1)$ with $\be_1\be_8=0$.  Then equation~(\ref{eq_ABXisC}) implies that $\ga_5 = \cdots =\ga_8 = 0$. Since $\be_1\be_8=0$, we have that one of the next two cases holds. 
\begin{enumerate}
\item[1.] Assume $\be_1=0$. Hence, it follows from equation~(\ref{eq_ABXisC}) that $\ga_1 = 0$, $\ga_2 = 0$, $x_6 = -\ga_4/\al_1$, $x_7 = \ga_3/\al_1$.

\item[2.] Assume that $\be_1\neq0$ and $\be_8=0$.  Hence, it follows from equation~(\ref{eq_ABXisC}) that $x_1 = \ga_1/\al_1 \be_1$, $x_2 = \ga_2/\al_1 \be_1$, $x_7 = (\ga_3/\al_1) - \be_1 x_3$, $x_6 = (\al_1 \be_1 x_4 - \ga_4) / \al_1$.
\end{enumerate}

\medskip
\item[$\mathrm{(VIII)}$] $(a,b)=\left(\al_1 e_1, \matr{\be_1}{(1,0,0)}{(\be_1 \be_8,0,0)}{\be_8}\right)$ with $\be_1,\be_8\neq0$.  Then equation~(\ref{eq_ABXisC}) implies that $\ga_5 = \cdots = \ga_8 = 0$.  Hence, it follows from equation~(\ref{eq_ABXisC}) that $x_3 = (\ga_3/\al_1 \be_1) - \be_8 x_7$, $x_4 = (\ga_4/\al_1 \be_1) + \be_8 x_6$, $x_5 = (\ga_1/\al_1) - \be_1 x_1$, $x_8 = (\ga_2/\al_1) - \be_1 x_2$. 

\medskip
\item[$\mathrm{(IX)}$] $(a,b)=\left(\al_1 e_1, \matr{\be_1}{(1,0,0)}{(0,1,0)}{\be_8}\right)$ with $\be_1\be_8=0$.  Then equation~(\ref{eq_ABXisC}) implies that $\ga_5 = \cdots = \ga_8 = 0$. Since $\be_1\be_8=0$, we have that one of the next two cases holds. 
\begin{enumerate}
\item[1.] Assume $\be_1=0$. Hence, it follows from equation~(\ref{eq_ABXisC}) that $\ga_3 = 0$, $x_5 = \ga_1/\al_1$, $x_8 = (\ga_2/\al_1) + x_7$. Therefore, $\ga_4 = \ga_1$ by equation~(\ref{eq_ABXisC}).

\item[2.] Assume that $\be_1\neq0$ and $\be_8=0$.  Hence, it follows from equation~(\ref{eq_ABXisC}) that $x_3 = \ga_3/\al_1 \be_1$, $x_5 = (\ga_1/\al_1) - \be_1 x_1$, $x_8 = (\ga_2/\al_1) + x_7 - \be_1 x_2$. Therefore, equation~(\ref{eq_ABXisC}) implies that
$$x_4 = \frac{\ga_4 - \ga_1}{\al_1 \be_1}  + x_1.$$  
\end{enumerate}

\medskip
\item[$\mathrm{(X)}$] $(a,b)=(\uu_1, \be_1 e_1 +\be_8 e_2)$ with $\be_1\be_8=0$ and  $(\be_1,\be_8)\neq(0,0)$.  Then equation~(\ref{eq_ABXisC}) implies that $\ga_3 = 0$, $\ga_4 = 0$, $\ga_5 = 0$, $\ga_8 = 0$. Since $\be_1\be_8=0$, we have that one of the next two cases holds. 
\begin{enumerate}
\item[1.] Assume $\be_1=0$ and $\be_8\neq0$. Hence, it follows from equation~(\ref{eq_ABXisC}) that $\ga_6 = 0$, $\ga_7 = 0$, $x_5 = \ga_1/\be_8$, $x_8 = \ga_2/\be_8$.

\item[2.] Assume that $\be_1\neq0$ and $\be_8=0$.  Hence, it follows from equation~(\ref{eq_ABXisC}) that $\ga_1 = 0$, $\ga_2 = 0$, $x_3 = \ga_7/\be_1$, $x_4 = -\ga_6/\be_1$.
\end{enumerate}

\medskip
\item[$\mathrm{(XI)}$] $(a,b)=(\uu_1, \be_2\uu_1)$ with $\be_2\neq 0$.  Then equation~(\ref{eq_ABXisC}) implies that $c=0$ and $x$ is an arbitrary. 

\medskip
\item[$\mathrm{(XII)}$] $(a,b)=(\uu_1, \be_1 e_1 + \be_5\vv_1)$ with  $\be_5\neq 0$.  Then equation~(\ref{eq_ABXisC}) implies that $\ga_3 = 0$, $\ga_4 = 0$, $\ga_5 = 0$, $\ga_8 = 0$.  Hence, it follows from equation~(\ref{eq_ABXisC}) that $x_1 = \ga_1/\be_5$, $x_2 = \ga_2/\be_5$, $x_6 = (\ga_6 + \be_1 x_4)/\be_5$, $x_7 = (\ga_7 - \be_1 x_3)/\be_5$. 

\medskip
\item[$\mathrm{(XIII)}$] $(a,b)=(\uu_1, \be_1e_1+\uu_2)$.  Then equation~(\ref{eq_ABXisC}) implies that $\ga_2 = \cdots = \ga_5 = 0$, $\ga_8 = 0$.  Hence, it follows from equation~(\ref{eq_ABXisC}) that $x_4 = \ga_1$, $x_8 = \ga_7 - \be_1 x_3$. Finally,  $\ga_6 = -\be_1 \ga_1$  by equation~(\ref{eq_ABXisC}).

\medskip
\item[$\mathrm{(XIV)}$] $(a,b)=(\uu_1, \vv_2)$.  Then equation~(\ref{eq_ABXisC}) implies that $\ga_1 = 0$, $\ga_3 = 0$, $\ga_4 = 0$, $\ga_5 = 0$, $\ga_7 = 0$, $\ga_8 = 0$, $x_3 = \ga_2$, $x_5 = -\ga_6$. 
\end{enumerate}
\end{proof}

\begin{cor}\label{cor_ABXisC}
Given nonzero $a,b \in \OO$ and $c \in \OO$, let $X \subset \OO$ be the set of all solutions of the equation $a(bx) = c$. Then $X$ satisfies one of the following conditions:
\begin{enumerate}
\item[(a)] $X$ is empty;

\item[(b)] $|X| = 1$;

\item[(c)] $X \in \Omega_r$ for some $r \in \{4,6\}$;

\item[(d)] $X = \OO$.
\end{enumerate}

Moreover,
\begin{enumerate}
\item[$\bullet$] each of the above cases occurs for some nonzero $a,b \in \OO$ and $c \in \OO$;

\item[$\bullet$] $|X| = 1$ if and only if $a,b \notin \OO_{\#}$;

\item[$\bullet$] $X = \OO$ if and only if $b = \xi\, \overline{a} \in \OO_{\#}$ for some $\xi \in \FF^{\times}$ and $c = 0$.
\end{enumerate}
\end{cor}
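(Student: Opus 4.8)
\emph{Proof proposal.} The plan is to split into cases according to how many of $a,b$ lie in $\OO_{\#}$, reducing the cases with an invertible coefficient to Corollary~\ref{cor_AXisB} (as in the proof of Corollary~\ref{cor_AXBisC}) and the case $a,b\in\OO_{\#}$ to Theorem~\ref{theo_ABXisC}. If $a,b\notin\OO_{\#}$, both are invertible and~(\ref{eq_inv}) shows that $x$ solves $a(bx)=c$ exactly when $x=b^{-1}(a^{-1}c)$, so $|X|=1$. If $a\notin\OO_{\#}$ and $b\in\OO_{\#}$, then by~(\ref{eq_inv}) the equation is equivalent to $bx=a^{-1}c$, whose solution set is described by Corollary~\ref{cor_AXisB} with non-invertible left coefficient $b$; thus $X$ is empty or $X\in\Omega_4$. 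If $a\in\OO_{\#}$ and $b\notin\OO_{\#}$, I would first note that $x\mapsto bx$ is a linear bijection of $\OO$ --- injectivity by~(\ref{eq_inv}), and surjectivity because $b(b^{-1}y)=y$, which follows from the identity $b(\ov b y)=n(b)y$, i.e.\ from~(\ref{eq4b}) applied to $\ov b$. Hence $X=b^{-1}Y$, where $Y$ is the solution set of $ay=c$, so once more Corollary~\ref{cor_AXisB} (non-invertible left coefficient $a$) gives that $X$ is empty or $X\in\Omega_4$. In particular $X=\OO$ is impossible in each of these three subcases, and $|X|=1$ can occur only when $a,b\notin\OO_{\#}$.

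For the remaining subcase $a,b\in\OO_{\#}$ I would use Theorem~\ref{theo_ABXisC}. First note that the properties ``$X=\emptyset$'', ``$|X|=1$'', ``$\dim X=r$'', ``$X=\OO$'', ``$c=0$'', and ``$b=\xi\,\ov a$ for some $\xi\in\FF^{\times}$'' are all preserved under replacing $(a,b,c)$ by $(ga,gb,gc)$ for $g\in\G$, since $g$ is an algebra automorphism with $\ov{ga}=g\ov a$ and $x\mapsto gx$ is a bijection of $\OO$. So we may take $(a,b)$ to be one of the canonical pairs of Proposition~\ref{prop_O2_norm0_canon} and read $X$ off from the corresponding formula in Theorem~\ref{theo_ABXisC}. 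Going through the fourteen cases and counting in each solution formula the number of coordinates $x_i$ left free (equivalently, $8$ minus the number of independent linear conditions imposed on $x$), I expect $\dim X\in\{4,6,8\}$, with $\dim X=8$ --- that is $X=\OO$ --- occurring exactly in case~$\mathrm{(V)}$.1, where $(a,b)=(\al_1 e_1,\be_8 e_2)$ and $c=0$, and in case~$\mathrm{(XI)}$, where $(a,b)=(\uu_1,\be_2\uu_1)$ and $c=0$. Together with the first paragraph this yields (a)--(d) and the description of when $|X|=1$.

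It remains to characterize when $X=\OO$. By the first paragraph $X=\OO$ forces $a,b\in\OO_{\#}$, and then the dimension count above shows $(a,b)$ must be $\G$-conjugate to case~$\mathrm{(V)}$.1 or case~$\mathrm{(XI)}$ with $c=0$. In case~$\mathrm{(V)}$.1 we have $\ov a=\al_1 e_2$, so $b=(\be_8/\al_1)\ov a$ with $\be_8/\al_1\in\FF^{\times}$; in case~$\mathrm{(XI)}$ we have $\ov a=-\uu_1$, so $b=(-\be_2)\ov a$ with $-\be_2\in\FF^{\times}$; in both situations $b\in\OO_{\#}$ and $c=0$, and undoing the $\G$-action proves the forward implication. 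Conversely, if $b=\xi\,\ov a\in\OO_{\#}$ with $\xi\in\FF^{\times}$ and $c=0$, then $n(\ov a)=n(a)=0$, so~(\ref{eq4b}) applied to $\ov a$ gives $a(\ov a x)=n(a)x=0$ for every $x$; hence $a(bx)=\xi\,a(\ov a x)=0=c$ for all $x$, i.e.\ $X=\OO$. The ``each condition is realized'' part follows from explicit examples: $a=b=1_{\OO}$ gives $|X|=1$; $a=e_1$, $b=e_2$, $c=1_{\OO}$ gives $X=\emptyset$ through case~$\mathrm{(V)}$.1; $a=b=e_1$, $c=e_1$ gives $X\in\Omega_4$ through case~$\mathrm{(V)}$.2; $a=e_1$, $b=e_2+\uu_1$, $c=e_1$ gives $X\in\Omega_6$ through case~$\mathrm{(VI)}$.1; and $a=b=\uu_1$, $c=0$ gives $X=\OO$.

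The step I expect to be the main chore is the case-by-case dimension count in Theorem~\ref{theo_ABXisC}: one must verify carefully that $X=\OO$ does not occur outside of $\mathrm{(V)}$.1 and $\mathrm{(XI)}$ and that the intermediate dimensions never leave $\{4,6\}$ --- this is mechanical but lengthy. The only conceptually non-routine point is the converse of the last assertion, where the identity $a(\ov a x)=n(a)x$, a consequence of~(\ref{eq4b}) and alternativity, is exactly what forces the full space $\OO$ to appear as the solution set once $b$ is proportional to $\ov a$, $a\in\OO_{\#}$, and $c=0$.
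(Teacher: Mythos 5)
Your proposal is correct and follows essentially the same route as the paper: the same four-way split on whether $a,b\in\OO_{\#}$, the same reductions to Corollary~\ref{cor_AXisB} via~(\ref{eq_inv}) when one coefficient is invertible, and the same use of Theorem~\ref{theo_ABXisC} (reading off the dimensions from the canonical pairs, with $X=\OO$ only in cases $\mathrm{(V)}$.1 and $\mathrm{(XI)}$) plus the identity $a(\ov a x)=n(a)x$ from~(\ref{eq4b}) for the $X=\OO$ characterization. Your extra details (explicit dimension counts and realizing examples) are consistent with the paper's more terse treatment.
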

\begin{proof} Assume that $a,b\not\in\OO_{\#}$. Then by~(\ref{eq_inv}) we have that $x$ is a solution of equation~(\ref{eq_ABXisC}) if and only if $x=b^{-1}(a^{-1}c)$.

Assume that $a\not\in\OO_{\#}$ and $b\in\OO_{\#}$.  Then $x$ is a solution of~(\ref{eq_ABXisC}) if and only if $bx=a^{-1}c$. Corollary~\ref{cor_AXisB} implies that either $X$ is empty or $X\in\Omega_4$.

Assume that $a\in\OO_{\#}$ and $b\not\in\OO_{\#}$.  Then $x$ is a solution of~(\ref{eq_ABXisC}) if and only if $bx$ is a solution of the equation $ay=c$. By Corollary~\ref{cor_AXisB}, the set $Y\subset \OO$ of all solutions of the equation $ay=c$ is either empty or $Y\in \Omega_4$. In the first case, $X$ is empty. In the second case, $X=b^{-1}Y \in \Omega_4$, since the multiplication by $b^{-1}$ is the invertible linear map $\OO\to\OO$.  

If $a,b\in\OO_{\#}$, then Theorem~\ref{theo_ABXisC} implies that either $X$ is empty, or $X \in \Omega_r$ for $r\in\{4,6\}$, or $X=\OO$. Assume that $X=\OO$. Then it follows from Theorem~\ref{theo_ABXisC} that the $G_2$-orbit of $(a,b)$ contains $(\al e_1,\be e_2)$ or $(\uu_1,\be \uu_1)$ for some $\al,\be\in\FF^{\times}$ and $c=0$; in particular, $b=\xi\,\ov{a}$ for some $\xi\in\FF^{\times}$. Assume that $b=\xi\,\ov{a}\in\OO_{\#}$ for some $\xi\in\FF^{\times}$ and $c=0$. Then for every $x\in\OO$ we have $a(bx)=\xi a(\ov{a}x)=\xi\,n(a)x=0$ by equalities~(\ref{eq4b}), i.e., $X=\OO$. The required statement is proven.
\end{proof}

\begin{remark}\label{rem63}
If $a(bx)=c$ has a solution for some $a,b \in \OO_{\#}$ and $c \in \OO$, then $\overline{a}c = 0$. However, the converse does not hold in general.
\end{remark}

\begin{proof}
If $a(bx)=c$, then $\overline{a}c = n(a)\, bx = 0$ by equalities~(\ref{eq4b}). On the other hand, for $a = e_1$, $b = e_2$, and $c = e_1$, we have $\overline{a}c = 0$, but the equation $a(bx)=c$ has no solution.
\end{proof}

\section{Linear monomial equations}\label{section_monom}

We write $a_1 \mult \cdots \mult a_m$ for an arbitrary non-associative product of $a_1,\ldots,a_m \in \OO$.  
A {\it multilinear monomial} $w = w(a_1,\ldots,a_m)$ is a product of the form
$a_{\sigma(1)} \mult \cdots \mult a_{\sigma(m)}$
for some permutation $\sigma \in \Sym_m$.  
A {\it linear monomial equation} over the octonions is an equation
$w(a_1,\ldots,a_m,x) = c$,
where $w(a_1,\ldots,a_m,x)$ is a multilinear monomial, $x \in \OO$ is a variable, and $c \in \OO$ is a constant.  
For example, the equations $(ax)b = c$ and $a(bx) = c$ are linear monomial equations.

\begin{cor}\label{cor_minom_eq}
Assume that $a_1,\ldots,a_m \in \OO$ are nonzero and $c \in \OO$.  
Let $X \subset \OO$ be the set of all solutions of a linear monomial equation
\[
w(a_1,\ldots,a_m,x)=c.
\]
Then $X$ satisfies one of the following conditions:
\begin{enumerate}
\item[(a)] $X$ is empty;

\item[(b)] $|X|=1$;

\item[(c)] $X \in \Omega_r$ for some $4 \le r \le 7$;

\item[(d)] $X = \OO$.
\end{enumerate}

Moreover,
\begin{enumerate}
\item[$\bullet$] each of the above cases occurs for some linear monomial equation;

\item[$\bullet$] $|X|=1$ if and only if $a_1,\ldots,a_m \notin \OO_{\#}$.
\end{enumerate}
\end{cor}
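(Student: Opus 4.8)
The plan is to reduce the general multilinear monomial equation $w(a_1,\dots,a_m,x)=c$ to an iterated composition of the three elementary cases already analyzed: left multiplication $a\,y$, the bracketed form $(a\,y)b$, and the form $a\,(by)$. The first observation is that in any multilinear monomial $w(a_1,\dots,a_m,x)$ the variable $x$ sits inside some innermost bracket, so after fixing the parenthesization we can write $w$ as a nested sequence of operations applied to $x$, each of which is either ``left-multiply by some $a_i$'', ``right-multiply by some $a_j$'', or a two-sided bracket of the form $y\mapsto (uy)v$ or $y\mapsto u(yv)$ (here $u,v$ are themselves monomials in the $a_i$, which since $a_i\neq 0$ are either invertible or non-invertible octonions). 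Working from the outside in, each step of the form ``multiply by an invertible octonion'' is an invertible linear map on $\OO$ and changes neither the nature of the solution set (empty, singleton, a flat of dimension $r$, or all of $\OO$) nor its dimension; such steps can be absorbed, replacing $c$ by its image under the corresponding linear isomorphism.

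Next I would peel off the outermost non-invertible operation. If every $a_i$ is invertible, then $w(a_1,\dots,a_m,x)=c$ is equivalent, after applying a chain of invertible linear maps, to $x=c'$ for some $c'\in\OO$, giving $|X|=1$; conversely if some $a_i\in\OO_{\#}$ then (choosing $x$ generically) one sees the map $x\mapsto w(a_1,\dots,a_m,x)$ is not injective, so $|X|\neq 1$. This establishes the ``moreover'' clause. Now suppose some $a_i\in\OO_{\#}$. Reading from outside in, let the first non-invertible operation encountered be left-multiplication by a non-invertible $a$ (the right-multiplication case is handled by applying the involution $\bar{\ \cdot\ }$, which is an anti-automorphism and a linear isomorphism of $\OO$). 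Then $w=c$ is equivalent to $a\cdot w'(x)=c$ where $w'$ is a shorter monomial; the set of possible values of $w'(x)$ over $x\in\OO$ is a linear subspace $\spaceV\subseteq\OO$ (the image of a linear map), and $X$ is the preimage under $w'$ of the affine fibre $\{z\in\spaceV: az=c\}$. By Corollary~\ref{cor_AXisB} the solution set of $az=c$ is either empty or a $4$-dimensional flat; intersecting with $\spaceV$ and pulling back along the (possibly non-injective) linear map $w'$, the dimension can only grow, so $X$ is empty or a flat of dimension $\geq 4$; since $\dim\OO=8$ it lies in $\Omega_r$ with $4\leq r\leq 8$, and $r=8$ means $X=\OO$. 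The intermediate cases $r\in\{5,6,7\}$, as well as the need to show $r$ never drops below $4$ even after all the pullbacks, are handled by tracking dimensions through the invertible reductions and invoking the explicit computations of Theorems~\ref{theo_AXBisC} and~\ref{theo_ABXisC} for the two-sided bracket steps, which show those steps also produce flats of dimension $\geq 4$.

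The realization clause (``each condition occurs'') is immediate from the three-variable cases already in hand: Corollaries~\ref{cor_AXisB}, \ref{cor_AXBisC}, \ref{cor_ABXisC} exhibit monomial equations with $X$ empty, singleton, $X=\OO$, and flats of every dimension in $\{4,5,6,7\}$ — e.g. $X\in\Omega_5$ or $\Omega_7$ from $(ax)b=c$, and $X\in\Omega_6$ or $X=\OO$ from $a(bx)=c$. I expect the main obstacle to be bookkeeping rather than conceptual: one must argue carefully that composing a ``pull back along a linear map'' operation with the ``$az=c$ has solution set in $\Omega_4$ or $\emptyset$'' fact never yields a flat of dimension strictly less than $4$, i.e. that the $4$-dimensional lower bound is preserved under all admissible reductions. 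This is where the fact that the relevant kernels (e.g. $\Ker$ of left multiplication by $\uu_1$ or by $\al_1 e_1$) are themselves $4$-dimensional — visible in Proposition~\ref{prop_AXisB} — must be used, rather than treated as a black box; the cleanest formulation is probably to prove by induction on $m$ the sharper statement that either $|X|=1$ or $X$ is empty or $X$ contains a translate of a $4$-dimensional subspace.
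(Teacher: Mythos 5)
There is a genuine gap, and it sits exactly at the point you yourself flag at the end: the lower bound $r\geq 4$ in case (c) is never actually proved. Your outside-in peeling reduces $w(x)=c$ to $a\,w'(x)=c$ with $a\in\OO_{\#}$ and $w'$ a possibly non-injective, non-surjective linear map, so that $X$ is the preimage under $w'$ of $\{z: az=c\}\cap \mathrm{im}(w')$. The assertion ``the dimension can only grow'' only concerns the pullback; the intersection with $\mathrm{im}(w')$ can cut the $4$-flat of Corollary~\ref{cor_AXisB} down to something of dimension $<4$ (or to a point), and nothing you say rules out that the subsequent pullback fails to compensate. Deferring this to ``bookkeeping'' plus Theorems~\ref{theo_AXBisC} and~\ref{theo_ABXisC} does not close it either: those theorems treat only the two specific equations $(ax)b=c$ and $a(bx)=c$ with $(a,b)$ a canonical pair in $\OO_{\#}^2$, and a general monomial may have several non-invertible coefficients at arbitrary nesting depths, which your decomposition into ``two-sided bracket steps'' does not reduce to those cases. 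A second, smaller soft spot: the claim that some $a_i\in\OO_{\#}$ forces $x\mapsto w(x)$ to be non-injective is asserted via ``choosing $x$ generically''; as written this is not an argument (it can be repaired, e.g.\ via $n(w(x))=n(a_1)\cdots n(a_m)n(x)=0$, or it simply follows once the $r\geq4$ bound is established).

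The paper closes the gap by working from the inside out rather than the outside in. First collapse every constant subproduct $a_i\mult a_j$ (condition~(\ref{eq_cor_cond})), then locate a non-invertible coefficient $a_{i_r}$ that multiplies a submonomial $v(x)$ all of whose coefficients are invertible; by~(\ref{eq_inv}) the map $x\mapsto v(x)$ is an \emph{invertible} linear map of $\OO$. Given a solution $x_0$, every $x$ with $a_{i_r}v(x)=a_{i_r}v(x_0)$ (or $v(x)a_{i_r}=v(x_0)a_{i_r}$ in the right-handed case, handled by the involution) already satisfies $w(x)=c$, and by Corollary~\ref{cor_AXisB} this set is the image under $v^{-1}$ of a $4$-flat, hence a $4$-flat contained in $X$. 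Since $X$ is itself a flat, $r\geq4$ follows with no dimension tracking, and this simultaneously yields the equivalence $|X|=1\iff a_1,\ldots,a_m\notin\OO_{\#}$. If you want to salvage your own outline, the clean statement to prove by induction is the one you guessed (``$X$ empty, or $|X|=1$, or $X$ contains a translate of a $4$-dimensional subspace''), but the inductive step essentially forces the paper's choice of peeling at an innermost non-invertible coefficient, where the remaining map is invertible and no intersection-with-image loss can occur.
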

\begin{proof} By Corollaries~\ref{cor_AXBisC} and~\ref{cor_ABXisC} each of the cases (a)--(d) from the formulation of the corollary hold for some $w(a_1,a_2,x)\in\{ (a_1x)a_2,\; a_1(a_2 x)\}$. The rest of the proof we split into two parts.

\medskip
\noindent{\bf 1.} If  $a_1,\ldots,a_m\not\in\OO_{\#}$, then applying equalities~(\ref{eq_inv}) $m$ times, we obtain that there exists a unique solution of the equation $w(a_1,\ldots,a_m,x)=c$.

\medskip
\noindent{\bf 2.} Assume that $a_i\in\OO_{\#}$ for some $1\leq i\leq m$ and $X$ is not empty, i.e., there exists $x_0\in\OO$ such that $w(a_1,\ldots,a_m,x_0)=c$. 

If $w(a_1,\ldots,a_m,x)=\cdots \mult (a_i a_j) \mult \cdots$ for some $1\leq i,j\leq m$, then we can denote $a'=a_i a_j$ and consider $w(a_1,\ldots,a_m,x)$ as a multilinear monomial in $\{x,a',a_r\,|\,1\leq r\leq m,\;r\neq i,j\}$; i.e.,  we diminish $m$ by one. After repeating this procedure several times, we can assume that 
\begin{eq}\label{eq_cor_cond}
w(x)\neq \cdots \mult (a_i  a_j) \mult \cdots \qquad \text{for all} \quad 1\leq i,j\leq m.
\end{eq}

Since condition~(\ref{eq_cor_cond}) holds, we have
\begin{align} 
w(a_1,\ldots,a_m,x) = a_{i_1} \mult \cdots  \mult a_{i_{r-1}} \mult (a_{i_r} v(x)) \mult a_{i_{r+1}} \mult \cdots  \mult a_{i_s}&\text{ or } \label{eq_canon1} \\
w(a_1,\ldots,a_m,x) = a_{i_1} \mult \cdots \mult a_{i_{r-1}}  \mult (v(x) a_{i_r}) \mult a_{i_{r+1}} \mult \cdots  \mult a_{i_s}&, \label{eq_canon2}
\end{align}%
where
\begin{enumerate}
\item[$\bullet$] $v(x):=v(a_{j_1},\ldots,a_{j_t},x)$ is a multilinear monomial for some $a_{j_1},\ldots,a_{j_t}\not\in\OO_{\#}$ with $t\geq 0$; 

\item[$\bullet$] $\{i_1,\ldots, i_s, j_1,\ldots, j_t\} = \{1,\ldots,m\}$ and $s+t=m$, $1\leq r\leq s$;

\item[$\bullet$] $a_{i_r}\in \OO_{\#}$.
\end{enumerate}

Denote by $X_1\subset \OO$ the set of solutions of the equation $a_{i_r} v(x) = a_{i_r} v(x_0)$ and denote by $X_2\subset \OO$ the set of solutions of the equation $v(x) a_{i_r}  =  v(x_0) a_{i_r}$. Note that  $X_1\subset X$ or $X_2\subset X$. Denote by $Y_1\subset \OO$ the set of solutions of the equation $a_{i_r} y = a_{i_r} v(x_0)$ and denote by $Y_2\subset \OO$ the set of solutions of the equation $y a_{i_r}  =  v(x_0) a_{i_r}$, where $y$ is a variable. Since $a_{j_1},\ldots,a_{j_t}$ are invertible,  equalities~(\ref{eq_inv}) imply that there are linear invertible maps $f_l: \OO \to \OO$ such that $f_l(Y_l) = X_l$, where $l=1,2$. One of the following two cases holds:

\begin{enumerate}
\item[$\bullet$] Assume that equality~(\ref{eq_canon1}) holds.  Then $x_0 \in X_1$ and $Y_1$ is also not empty. Corollary~\ref{cor_AXisB} implies that $Y_1\in \Omega_4$, since $a_{i_r}\in \OO_{\#}$. Applying the invertible linear map $f_1$ to $Y_1$, we can see that $X_1\in \Omega_4$. 

\item[$\bullet$] Assume that equality~(\ref{eq_canon2}) holds.  Then $x_0 \in X_2$ and $Y_2$ is also not empty. Applying the involution $\ov{\stackrel{\;\;}{\;\;}}:\OO\to \OO$ to the equation $y a_{i_r}  =  v(x_0) a_{i_r}$, we also obtain that  $Y_2\in \Omega_4$. Applying the invertible linear map $f_2$ to $Y_2$, we can see that $X_2\in \Omega_4$.
\end{enumerate}

Obviously, $X \in \Omega_r$ for some $0\leq r\leq 8$.  Since  $X_l\in \Omega_4$  and $X_l\subset X$ for some $l=1,2$, we obtain that $r\geq4$. The required statement is proven.
\end{proof}

\begin{cor}\label{cor_invertible}  If a linear monomial equation $w(a_1,\ldots,a_m,x)=c$ with nonzero $a_1,\ldots,a_m,c\in\OO$ has at least two solutions, then it has an invertible solution.
\end{cor}
\begin{proof} Assume that $X\subset \OO$ is the set of all solutions of a linear monomial equation $w(a_1,\ldots,a_m,x)=c$. Since $|X|>1$, there exist $a_i\in\OO_{\#}$ for some $1\leq i\leq m$ (see Corollary~\ref{cor_minom_eq}). Therefore, case 2 of the proof of Corollary~\ref{cor_minom_eq} holds. Hence, we repeat the reasoning from case 2 of the proof of Corollary~\ref{cor_minom_eq} and use notations from it. One of the following two cases holds:

\begin{enumerate}
\item[$\bullet$] Assume that equality~(\ref{eq_canon1}) holds.  Then $x_0 \in X_1$ and $Y_1$ is also non-empty. Since $c\neq0$, we have that $a_{i_r} v(x_0)$ is also nonzero. Hence, since $a_{i_r}\in \OO_{\#}$, it is easy to see that Proposition~\ref{prop_AXisB} implies that there exists an invertible $y_0\in Y_1$. Finally, $f_1(y_1)\in X_1\subset X$ is invertible. 

\item[$\bullet$] Assume that equality~(\ref{eq_canon2}) holds.  Then $x_0 \in X_2$ and $Y_2$ is also non-empty. Since $c\neq0$, we have that $v(x_0) a_{i_r}$ is also nonzero. Applying the involution $\ov{\stackrel{\;\;}{\;\;}}:\OO\to \OO$ to the equation $y a_{i_r}  =  v(x_0) a_{i_r}$, as above, we obtain that  there exists an invertible $y_0\in Y_2$, since $a_{i_r}\in \OO_{\#}$.  Finally, $f_2(y_2)\in X_2\subset X$ is invertible. 
\end{enumerate}
\end{proof}

\begin{remark}\label{rem_invertible}
In the statement of Corollary~\ref{cor_invertible}, the assumption that $c$ is nonzero is essential and cannot be omitted.

Indeed, consider the linear equation $ax = 0$ with $a \in \OO_{\#}$ nonzero. By Corollary~\ref{cor_AXisB}, its solution set $X$ is contained in $\Omega_4$ and is therefore infinite. On the other hand, if $X$ contained an invertible element $x_0$, then we would have $0 = ax_0 = (ax_0)x_0^{-1} = a$,
by equality~(\ref{eq_inv}), which contradicts the assumption $a \neq 0$.
\end{remark}

\medskip

Recall that an octonion $a \in \OO$ is said to be \emph{nilpotent} if $a^n = 0$ for some $n > 0$. 
Proposition~\ref{prop_O_canon}, Lemma~3.1 of~\cite{Lopatin_Rybalov_2025}, and equality~(\ref{eq2}) together imply that the following three conditions for $a \in \OO$ are equivalent:
\begin{equation}
a \text{ is nilpotent }
\;\;\Longleftrightarrow\;\;
\tr(a) = n(a) = 0
\;\;\Longleftrightarrow\;\;
a^2 = 0.
\end{equation}

\begin{remark}\label{rem_nilpotent}
In the statement of Corollary~\ref{cor_invertible}, a nilpotent solution cannot be considered in place of an invertible one.

Indeed, consider the linear equation $ax = a$ with $a \in \OO_{\#}$ nonzero. Since 
$\overline{a}a = \overline{a}\,\overline{\overline{a}} = n(\overline{a}) = n(a) = 0$,
Corollary~\ref{cor_AXisB} implies that the set of its solutions $X$ is contained in $\Omega_4$ and is therefore infinite. On the other hand, if $X$ contained a nilpotent element $x_0$, then we would have $0 = a \cdot x_0^2 = (ax_0)x_0 = ax_0 = a$, by equality~(\ref{eq4}), which contradicts the assumption that $a \neq 0$.
\end{remark}


\section{Degenerations of linear monomial equations}\label{section_degenerate}

Given $a_1,\ldots,a_m,c\in\OO$ and $a'_1,\ldots,a'_m,c'\in\OO$, the linear monomial equation $w(a'_1,\ldots,a'_m,x)=c'$ is called a {\it degeneration} of the linear monomial equation $w(a_1,\ldots,a_m,x)=c$, if the Zariski closure of the $\G$-orbit of $(a_1,\ldots,a_m,c)\in\OO^{m+1}$ contains $(a'_1,\ldots,a'_m,c')$.

\begin{cor}\label{cor_degenerate} Let $w(a'_1,\ldots,a'_m,x)=c'$ be a degeneration of  $w(a_1,\ldots,a_m,x)=c$, where $a_1,\ldots,a_m,a'_1,\ldots,a'_m\in\OO$ are nonzero. Let $X'$ ($X$, respectively) be the set of all  solutions of the first (the second, respectively) equation. Then 
\begin{enumerate}
\item[(a)] the first equation has a unique solution if and only if the second equation has a unique solution;

\item[(b)] in case $w(a_1,x) = a_1 x$ we have that $\dim X = \dim X'$;

\item[(c)] in case $w(a_1,a_2,x) = (a_1x)a_2$ we have that $\dim X = 4$ if and only if $ \dim X' =4$;

\item[(d)] in case $w(a_1,a_2,x) = a_1(a_2x)$ we have that $X = \OO$ if and only if $X' =\OO$.
\end{enumerate}
\end{cor}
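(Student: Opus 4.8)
The plan is to reduce each claim to a statement about how set-theoretic dimension of solution sets behaves along degenerations, exploiting that the whole setup is $\G$-equivariant and that degeneration means lying in a Zariski-closed orbit closure. First I would record the general principle: if $v(a'_1,\ldots,a'_m,x)=c'$ is a degeneration of $w(a_1,\ldots,a_m,x)=c$, then (after replacing $(a'_1,\ldots,a'_m,c')$ by a point in the actual $\G$-orbit, which does not change $\dim X'$ by equivariance) the tuple $(a'_1,\ldots,a'_m,c')$ lies in the Zariski closure of the $\G$-orbit of $(a_1,\ldots,a_m,c)$; in particular each $a'_i$ lies in the closure of $\G\cdot a_i$, so $n(a'_i)=n(a_i)$, and hence $a'_i\in\OO_\#$ if and only if $a_i\in\OO_\#$ (the condition $n(\cdot)=0$ is closed and $\G$-invariant, and $n(\cdot)\ne 0$ is also $\G$-invariant). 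This immediately gives (a): by the "$|X|=1$ iff all $a_i\notin\OO_\#$" parts of Corollaries~\ref{cor_AXisB}, \ref{cor_AXBisC}, \ref{cor_ABXisC}, \ref{cor_minom_eq}, the uniqueness of the solution depends only on which $a_i$ are in $\OO_\#$, and that data is preserved under degeneration in both directions.

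Next I would treat (b). If $a_1\notin\OO_\#$ then by (a) both equations have a unique solution, so $\dim X=\dim X'=0$. If $a_1\in\OO_\#$, then $a'_1\in\OO_\#$ as well, and Corollary~\ref{cor_AXisB} forces each of $X,X'$ to be either empty or in $\Omega_4$; so the only thing to rule out is that one is empty while the other has dimension $4$. The key extra input is the criterion from Corollary~\ref{cor_AXisB}(b): for $a\in\OO_\#$, $X$ is nonempty iff $\ov a c=0$. Now the map $(a,c)\mapsto \ov a c$ is a $\G$-equivariant polynomial map $\OO^2\to\OO$ (using $\ov{ga}=g\ov a$ and $g(ac)=(ga)(gc)$), so its zero locus $Z=\{(a,c):\ov a c=0\}$ is a $\G$-stable Zariski-closed set. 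If $(a_1,c)\in Z$ then its whole orbit closure is in $Z$, hence $(a'_1,c')\in Z$; combined with the preservation of the "$\in\OO_\#$" condition, nonemptiness of $X$ implies nonemptiness of $X'$. Symmetrically, one must also show emptiness is preserved, i.e. $\dim X=\dim X'$ — and here the right way is to argue that for fixed $a_1\in\OO_\#$ the solution set of $a_1 x=c$ is nonempty exactly on a linear subspace of $c$'s (the image of left multiplication by $a_1$), which is $4$-dimensional, and that under degeneration the pair $(a_1,c)$ stays in the incidence variety $\{(a,c): c\in a\OO\}$, again a closed $\G$-stable set. So $\dim X$ is an upper semicontinuous function along the orbit closure that only takes values $-1$ and $4$, and both values are "closed conditions" in the appropriate sense; pinning this down cleanly is the technical heart of (b).

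For (c), the same machinery applies but now with the trichotomy $\{-1,4,5,7\}$ from Corollary~\ref{cor_AXBisC}. I would first dispose of the cases where $a_1\notin\OO_\#$ or $a_2\notin\OO_\#$ using the reductions in the proof of Corollary~\ref{cor_AXBisC} (which express $X$ via a single equation $a'x=b'$), so that $\dim X\in\{-1,4\}$ there, handled as in (b). When $a_1,a_2\in\OO_\#$, both $X$ and $X'$ lie in $\{-1,4,5,7\}$, and the claim is only about the value $4$ versus "something else"; so I need: (i) if $\dim X=4$ then $\dim X'\in\{-1,4\}$, which follows because $\dim X'\ge\dim X$ would be false in general but $\dim X'$ cannot jump \emph{up} under specialization for fibre-dimension reasons — actually the correct direction is that solution-set dimension is \emph{lower} semicontinuous here is false, so instead I would argue directly from the $\G$-classification: $(a_1,a_2)\in\OO_\#^2$ has an orbit that degenerates only to orbits appearing "below" it in Proposition~\ref{prop_O2_norm0_canon}, and I would use the explicit list in Theorem~\ref{theo_AXBisC} to check that the cases giving $\dim X=4$ (which occur when exactly one of $a_1,a_2$ is in $\OO_\#$) and the cases giving $\dim X\in\{5,7\}$ (both in $\OO_\#$) are separated by the closed condition "$a_1\in\OO_\#$ and $a_2\in\OO_\#$"; combined with part (a) of Corollary~\ref{cor_AXBisC} this yields (c) in both directions. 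Finally (d): Corollary~\ref{cor_ABXisC} gives "$X=\OO$ iff $b=\xi\ov a\in\OO_\#$ for some $\xi\in\FF^\times$ and $c=0$". The condition "$c=0$" is closed; the condition "$\exists\xi\in\FF^\times:\, b=\xi\ov a$" together with "$a\in\OO_\#$" cuts out a $\G$-stable constructible set whose closure is contained in $\{b\wedge\ov a=0\}\cap\{n(a)=0\}$ (where $b\wedge\ov a$ denotes the $2\times 2$ minors, a $\G$-equivariant condition), and conversely on that closed locus one still has $a(bx)=\xi a(\ov a x)=\xi n(a)x=0$ by~(\ref{eq4b}), so actually $X=\OO$ holds on the \emph{closed} set $\{b=\xi\ov a\in\OO_\#,\ c=0\}$ — hence "$X=\OO$" is a closed $\G$-stable condition and is preserved under degeneration in both directions, giving (d). The main obstacle throughout is not any single computation but making the semicontinuity/closedness bookkeeping precise: one must consistently pass to a genuine orbit representative on the degenerate side (legitimate by $\G$-invariance of $\dim$), and identify, for each claim, the exact $\G$-stable closed subvariety of $\OO^{m+1}$ whose defining equations characterize the relevant value of $\dim X$, so that membership propagates along orbit closures.
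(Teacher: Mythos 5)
Your opening move --- a degeneration preserves every polynomial $\G$-invariant, so $n(a'_i)=n(a_i)$ and each $a'_i$ lies in $\OO_{\#}$ exactly when $a_i$ does --- is exactly what the paper's proof consists of: it records $n(a_i)=n(a'_i)$ and then cites Corollaries~\ref{cor_minom_eq}, \ref{cor_AXisB}, \ref{cor_AXBisC}, \ref{cor_ABXisC}; this settles part (a), and there your proposal and the paper coincide. Where you go further, however, there is a genuine gap: membership in a closed $\G$-stable subset of $\OO^{m+1}$ propagates only \emph{one} way along a degeneration, namely from $(a_1,\ldots,a_m,c)$ to the points of its orbit closure. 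Thus closedness of $\{\ov{a}c=0\}$, of the incidence set $\{(a,c):c\in a\OO\}$, or of $\{b=\xi\,\ov{a}\in\OO_{\#},\ c=0\}$ gives only the forward implications (solvability, or $X=\OO$, passes to the degenerate equation). Your statement in (d) that such a condition is ``preserved under degeneration in both directions'' is not a valid principle; in (b) you explicitly leave the converse (``emptiness is preserved'') unproved, calling it the technical heart; and in (c) the same converse, in the case where exactly one of $a_1,a_2$ lies in $\OO_{\#}$, is never addressed --- the semicontinuity discussion there does not decide it.

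Moreover this missing half cannot be recovered by any closedness or semicontinuity bookkeeping of the kind you set up. Let $g_t\in\G$ be the automorphism fixing $e_1,e_2$ with $\uu_1\mapsto\uu_1$, $\uu_2\mapsto t^{-1}\uu_2$, $\uu_3\mapsto t\uu_3$ and $\vv_1\mapsto\vv_1$, $\vv_2\mapsto t\vv_2$, $\vv_3\mapsto t^{-1}\vv_3$. Then $g_t\bigl(e_1,\uu_1+\vv_2\bigr)=(e_1,\uu_1+t\vv_2)$, so $(e_1,\uu_1)$ lies in the Zariski closure of the $\G$-orbit of $(e_1,\uu_1+\vv_2)$; here $\ov{a_1}c=e_2(\uu_1+\vv_2)=\vv_2\neq0$ while $\ov{a'_1}c'=e_2\uu_1=0$, i.e.\ an equation $a_1x=c$ with empty solution set degenerates to one whose solution set lies in $\Omega_4$ by Corollary~\ref{cor_AXisB}. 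Similarly, scaling $\uu_1\mapsto t\uu_1$ with $e_1,e_2$ fixed degenerates $(e_1,e_2,\uu_1)$ to $(e_1,e_2,0)$, so for $w(a_1,a_2,x)=a_1(a_2x)$ an equation with $X=\emptyset$ degenerates to one with $X'=\OO$. So the non-solvability locus is \emph{not} closed along orbit closures, and your route can at best deliver the forward halves of (b)--(d); the backward halves are untouched by everything in the proposal. Note also that the paper's own argument is of a completely different (one-line) nature --- norm preservation plus the cited corollaries, with no orbit-closure analysis of the solvability conditions --- so your extra machinery neither reproduces it nor fills the part it does not spell out.
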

\begin{proof} Since $n(a_i)=n(a'_i)$ for all $1\leq i\leq m$, then  
part (a) follows from Corollary~\ref{cor_minom_eq}, 
part (b) follows from Corollary~\ref{cor_AXisB},
part (c) follows from Corollary~\ref{cor_AXBisC}, 
part (d) follows from Corollary~\ref{cor_ABXisC}.
\end{proof}

\corr{
\section{Conclusion}\label{section_conclusion}

In this manuscript, over an algebraically closed field, we described the affine varieties of solutions to the linear equations $a(xb)=c$ and $a(bx)=c$ over the algebra of split-octonions $\OO$, which is non-division. Moreover, we showed that if a linear monomial equation over the split-octonions with a nonzero constant term has at least two solutions, then it necessarily possesses an invertible solution.

Possible directions for further research include solving the following equations.

\begin{enumerate}
\item[$\bullet$] Special equations over $\OO$, such as the generalized Sylvester equation $ax - yb = c$, the equation $(\ov{x}a)x=b$, the quadratic equation $x^2 + ax + b = c$, and the system of equations $ax=c$ and $xb=d$, where $x,y\in\OO$ are variables and $a,b,c,d\in\OO$ are given constants.

\item[$\bullet$] The general linear equation over $\OO$;

\item[$\bullet$] Polynomial equations over split-octonions. The first step in this direction has already been made in~\cite{Lopatin_Rybalov_2025}, where every polynomial equation
\[
\alpha_n x^n + \cdots + \alpha_1 x + a = 0,
\]
with $\alpha_1,\ldots,\alpha_n\in\FF$ and $a\in\OO$, was solved over split-octonions when $\FF$ is algebraically closed. This result was later generalized to the case of an arbitrary field in~\cite{Lopatin_Eq_over_O_III}.
\end{enumerate} 
}


\bigskip

\bibliographystyle{abbrvurl}
\bibliography{main}

\end{document}